\newtheorem*{maintheorem*}{Main Theorem}
\newtheorem{theorem}{Theorem}[section]
\newtheorem*{theorem*}{Main Theorem}
\newtheorem{question}[theorem]{Question}
\newtheorem{prop}[theorem]{Proposition}
\newtheorem{conj}[theorem]{Conjecture}
\newtheorem{cor}[theorem]{Corollary}
\theoremstyle{definition}
\newtheorem{example}[theorem]{Example}
\numberwithin{equation}{section}
\newcommand{\pp}{\mathbb{P}}
\newcommand{\Q}{\mathbb{Q}}
\newcommand{\qq}{\mathbb{Q}}
\newcommand{\zz}{\mathbb{Z}}
\newcommand{\N}{\mathbb{N}}
\providecommand\ldb{\llbracket}
\providecommand\rdb{\rrbracket}
\newcommand{\gp}{\text{gp}}
\newcommand{\rr}{\mathbb{R}}
\newcommand{\nn}{\mathbb{N}}
\keywords{cyclic free semirings, omega-primality, elasticity}
\subjclass[2010]{Primary: 20M13; Secondary: 20M14, 16Y60}
\begin{document}
	
	\mbox{}
	\title{On the primality and elasticity of algebraic valuations of cyclic free semirings}
	
	\author{Nancy Jiang}
	\address{Milton Academy\\Milton, MA 02186}
	\email{yanan\_jiang22@milton.edu}
	
	\author{Bangzheng Li}
	\address{Christian Heritage School\\Trumbull, CT 06611}
	\email{libz2003@outlook.com}
	
	\author{Sophie Zhu}
	\address{Williamsville East High School\\ Amherst, NY 14051}
	\email{sophieiriszhu@gmail.com}

	\date{\today}

	\begin{abstract}
		A cancellative commutative monoid is atomic if every non-invertible element factors into irreducibles. Under certain mild conditions on a positive algebraic number $\alpha$, the additive monoid $M_\alpha$ of the evaluation semiring $\nn_0[\alpha]$ is atomic. The atomic structure of both the additive and the multiplicative monoids of $\nn_0[\alpha]$ has been the subject of several recent papers. Here we focus on the monoids $M_\alpha$, and we study its omega-primality and elasticity, aiming to better understand some fundamental questions about their atomic decompositions. We prove that when $\alpha$ is less than $1$, the atoms of $\nn_0[\alpha]$ are as far from being prime as they can possibly be. Then we establish some results about the elasticity of $\nn_0[\alpha]$, including that when $\alpha$ is rational, the elasticity of $M_\alpha$ is full (this was previously conjectured by S. T. Chapman, F. Gotti, and M. Gotti).
	\end{abstract}

	\maketitle

	\bigskip
	\section{Introduction}
	
	A cancellative and commutative (additive) monoid $M$ is said to be atomic if every non-invertible element of $M$ can be decomposed as a sum of finitely many atoms (i.e., irreducible elements). This decomposition of an element into a formal sum of atoms is called a factorization. The multiplicative flavor of the term `factorization' is due to the fact that the phenomenon of multiple decompositions into irreducibles/atoms was first studied in rings of integers of algebraic number fields (for instance, in the ring of integers $\zz[\sqrt{-5}]$, the element $6$ has two distinct atomic decompositions, namely, $6 = 2 \cdot 3 = (1 - \sqrt{-5})(1 + \sqrt{-5}$)) and later in (the multiplicative monoids of)  integral domains. 
	\smallskip
	
	Given that the primary purpose of this paper is to investigate the phenomenon of multiple factorizations in certain additive submonoids of the nonnegative cone of $\rr$, we will use additive notation. 
	For a positive real $\alpha$, we let $\nn_0[\alpha]$ denote the smallest subsemiring of $\rr$ (with identity) containing~$\alpha$, namely,
	\[
		\nn_0[\alpha] := \{ f(\alpha) \mid f(x) \in \nn_0[x] \},
	\] 
	where $\nn_0[x]$ is the semiring of polynomials in the indeterminate $x$ with coefficients in $\nn_0$. It follows immediately that $\nn_0[\alpha]$ is the additive submonoid of $\rr$ generated by the set $\{\alpha^n \mid n \in \nn_0\}$. In this paper, we will focus on the additive structure of $\nn_0[\alpha]$.
	\smallskip
	
	For rational numbers $\alpha$, the atomicity of the monoid $\nn_0[\alpha]$ was first considered in \cite[Section~5]{GG18}, and it was more thoroughly studied later in~\cite{CGG20}, where the authors investigated arithmetic and factorization invariants of $\nn_0[\alpha]$, including the set of lengths, elasticity, and omega-primality.  In the same paper, the authors proved that the set of elasticities of $\nn_0[\alpha]$ is dense in $[1, \infty)$ for every non-integer rational $\alpha > 1$, and they also proved some special cases of following conjecture.
	
	\begin{conj} \cite[Conjecture~4.8]{CGG20} \label{conj:elasticity}
		For every non-integer rational $\alpha > 1$, the set of elasticities of $\nn_0[\alpha]$ is $[1, \infty) \cap \qq$.
	\end{conj}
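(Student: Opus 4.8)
The claim comprises two inclusions, and the first is immediate. Since $\alpha>1$, every power $\alpha^k$ is $\ge 1$, hence every nonzero element of $M_\alpha$ is $\ge 1$ and every factorization of $x\in M_\alpha$ has at most $\lfloor x\rfloor$ atoms; thus $M_\alpha$ is a BF-monoid and each elasticity $\rho(x)=\max\mathsf L(x)/\min\mathsf L(x)$ is a rational number in $[1,\infty)$. (Here, writing $\alpha=n/d$ in lowest terms, $\mca(M_\alpha)=\{\alpha^k:k\in\nn_0\}$, and the identity $n^m=d^m\alpha^m$ already shows that these rationals are unbounded, which the reverse inclusion absorbs.)

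For the reverse inclusion fix $q=a/b\in[1,\infty)\cap\qq$ in lowest terms; $q=1$ is realized by $x=1$, so assume $a>b$. I will use a ``core plus inert atoms'' construction. For distinct exponents $K_1>\cdots>K_s$ the element $w=\sum_{i=1}^s\alpha^{K_i}$ is rigid: $\mathsf L(w)=\{s\}$, since every coefficient in sight is $1<d\le n$ and hence no elementary trade $n\alpha^k\leftrightarrow d\alpha^{k+1}$ applies. More generally, given a \emph{core} $B\in M_\alpha$, let $T$ exceed every exponent appearing in any factorization of $B$ and set $z=B+\sum_{i=1}^s\alpha^{T+i}$. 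A scale-separation argument --- whose point is that the top coefficient of any factorization of $B$ is $<n$, so trades inside $B$ can never push material across the gap, while the inert block cannot move on its own --- shows $\mathsf L(z)=s+\mathsf L(B)$, whence $\rho(z)=\bigl(\max\mathsf L(B)+s\bigr)/\bigl(\min\mathsf L(B)+s\bigr)$. It remains to produce a core and a shift realizing $a/b$.

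The model case is $d<n<2d$. Take $B_j:=n\alpha^j=d\alpha^{j+1}$. Its shortest factorization is $d\alpha^{j+1}$ (one cannot trade up, as $d<n$), and I claim
\[
\mathsf L(n\alpha^j)=\{\,d+i(n-d):0\le i\le j+1\,\}.
\]
Indeed, from $d\alpha^{j+1}$ the only available move is the single down-trade, which when executed one level at a time walks through factorizations whose lengths increase by exactly $n-d$, stopping at $d+(j+1)(n-d)$; since the factorization graph is connected by elementary trades and each trade changes the length by $\pm(n-d)$, these are all the lengths. Now $\min\mathsf L(B_j)=d$ and $\max\mathsf L(B_j)=d+(j+1)(n-d)$, and solving $\rho(z)=a/b$ for $s$ gives $s=\dfrac{b(j+1)(n-d)}{a-b}-d$; choosing $j+1$ to be a large multiple of $(a-b)/\gcd(a-b,\,n-d)$ makes $s$ a nonnegative integer, and the resulting $z$ has $\rho(z)=a/b$. (When $n-d=1$ the length set of $B_j$ is a full interval, so even this congruence disappears.)

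The main obstacle is the remaining range $n\ge 2d$. There a given down-trade may be performed several times before one must descend a level, so $\mathsf L(n\alpha^j)$ is no longer an arithmetic progression: $\max\mathsf L(n\alpha^j)$ obeys a mixed-radix recursion in $j$, and its residue modulo $a-b$ is not transparently controllable. The fix I would pursue is to enlarge the pool of cores --- establishing an exact formula for $\max\mathsf L(c\alpha^j)$ for every $d\le c\le n-1$ and varying $j$, which should supply values in every needed residue class --- or, failing that, to stack several cores at widely separated scales (upgrading the scale-separation lemma to finitely many blocks) so as to recover the missing granularity, or to interpolate between nearby achievable elasticities using the density of $\{\rho(x):x\in M_\alpha\}$ in $[1,\infty)$ already established in \cite{CGG20}. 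Carrying out the factorization-length bookkeeping for whichever family is chosen is the crux of the argument.
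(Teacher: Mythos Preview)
Your first inclusion and the ``core plus inert atoms'' mechanism are both correct and match the paper's approach (the paper's Proposition~\ref{prop:density of the set of elasticities} makes your scale-separation claim rigorous via the observation that $\alpha$ is not an algebraic integer, which forces the high atom to appear in every factorization of $\beta+\alpha^N$). Your computation of $\mathsf L(n\alpha^j)$ in the regime $d<n<2d$ is also correct and gives a clean explicit proof in that range that the paper does not isolate.

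The genuine gap is exactly where you flag it: the case $n\ge 2d$. None of your three suggested fixes is carried out, and the third---interpolating from density---cannot work in principle: density of the set of elasticities in $[1,\infty)$ says nothing about which rationals are actually attained. Your second suggestion, stacking several cores at separated scales, is the right instinct and is what the paper does, but the idea you are missing is \emph{how to choose} the cores so that the divisibility condition is met without any formula for $\max\mathsf L$. The paper's device is a pigeonhole argument: for the target $a/b$ (your notation) and cores $n^k\in\nn_0[\alpha]$, set $r_k\equiv b\,\max\mathsf L(n^k)-a\,\min\mathsf L(n^k)\pmod{a-b}$. Some residue $r$ occurs for infinitely many $k$; one picks exactly $a-b$ such indices $k_1<\cdots<k_{a-b}$ spaced so far apart that the minimum- and maximum-length factorizations of $x=n^{k_1}+\cdots+n^{k_{a-b}}$ are the concatenations of those of the summands (this uses \cite[Lemma~3.2]{CGG20}, which pins the minimum-length factorization of $n^k$ to coefficients in $\ldb 0,n-1\rdb$ supported on exponents $\ge k$). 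The sum of $a-b$ copies of the same residue vanishes modulo $a-b$, so your divisibility condition $(a-b)\mid b\max\mathsf L(x)-a\min\mathsf L(x)$ is automatically satisfied and the inert-atom shift finishes the job. This pigeonhole step completely sidesteps the mixed-radix recursion for $\max\mathsf L(n\alpha^j)$ that you could not control.
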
 

	\noindent Generalizations of $\nn_0[\alpha]$ (with $\alpha$ rational) were also studied in~\cite{ABP21} from the factorization-theoretical point of view. More recently, a deeper and more systematic study of the atomic structure of $\nn_0[\alpha]$ was carried out in~\cite{CG20} for any positive real $\alpha$.
	\smallskip
	
	It is worth mentioning that atomicity and the arithmetic of factorizations in semirings of polynomials and their evaluations have earned significant attention in recent years. In \cite{CCMS09}, Cesarz et al. studied factorizations in $\rr_{\ge 0}[x]$ and $\nn_0[\alpha]$ paying special attention to the elasticity. In addition, arithmetic properties of semigroup semirings were considered by Ponomarenko in~\cite{vP15}. On the other hand, Campanini and Facchini in~\cite{CF19} studied several factorization-theoretical aspects of the multiplicative monoid of the semiring $\nn_0[x]$. More recently, Baeth and Gotti in~\cite{BG20} investigated the atomic structure of the multiplicative monoid of upper triangular matrices over positive semirings (i.e., sub-semirings of $\rr_{\ge 0}$), while a further investigation of the atomicity of both the additive and the multiplicative monoids of positive semiring was carried out by Baeth, Chapman, and Gotti~\cite{BCG21}. Even more recently, the third author studied factorizations in the additive evaluation monoids of the Laurent semiring $\nn_0[x,x^{-1}]$ at nonnegative real numbers.
	\smallskip
	
	The fundamental purpose of the present paper is to continue the study of $\nn_0[\alpha]$ initiated in~\cite{CG20}. Though, unlike~\cite{CG20}, which focused on the atomicity, here we study two arithmetic statistics of $\nn_0[\alpha]$: the omega-primality and the elasticity. In every atomic monoid, it is well known that every prime element is an atom. However, the converse does not hold in general. The omega-primality,  introduced by Geroldinger and Hassler in~\cite{GH08}, is an atomic statistic that measures how far an atom is from being a prime. In Section~\ref{sec:omega-primality}, we prove that, according to the omega-primality measurement, every atom in $\nn_0[\alpha]$ is as far from being a prime as it can possibly be (provided that $0 < \alpha < 1$). The omega-primality of the additive monoids $\nn_0[q]$ (for any positive rational $q$) was recently considered by Chapman, Gotti, and Gotti in~\cite[Section~5]{CGG20}.
	\smallskip
	
	An integral domain $R$ is called half-factorial if its multiplicative monoid is atomic and if any two factorizations of the same nonzero nonunit element have the same number of irreducibles (counting repetitions). The elasticity was introduced and first studied in the eighties by Steffan~\cite{jlS86} and Valenza~\cite{rV90} as a measurement to understand how far Dedekind domains and, specifically, rings of integers of algebraic number fields are from being half-factorial domains. Since then, the elasticity has become one of the most popular arithmetic statistics studied in factorization theory (see, for instance, \cite{AA92,AA94,AC00,CCMS09,CHM06,fHK95} and the most recent papers~\cite{BC16,GZ18,fG20,GO20,dG20}). The elasticity of a monoid can be defined similarly to that of an integral domain. The last two sections of the paper are devoted to study the elasticity of $\nn_0[\alpha]$. In Section~\ref{sec:elasticity}, we prove that the elasticity of $\nn_0[\alpha]$ is finite if and only if $\nn_0[\alpha]$ is a finitely generated monoid. In addition, we find conditions under which the set of elasticities of $\nn_0[\alpha]$ is dense in the elasticity interval. In Section~\ref{sec:rational cyclic semirings}, we focus on the elasticity of $\nn_0[\alpha]$ when $\alpha$ is rational, and we show that $\nn_0[\alpha]$ is fully elastic when $\alpha$ is a non-integer rational greater than $1$, which is Conjecture~\ref{conj:elasticity}.

	\bigskip
	\section{Background}
	\smallskip

	\subsection{General Notation}
	\label{sec:background_1}
	\smallskip
	
	We let $\pp$, $\nn$, and $\nn_0$ denote the set of primes, positive integers, and nonnegative integers, respectively. If $S$ is a subset of $\rr$ and $r$ is a real number, we let $S_{\ge r}$ denote the set $\{s \in S \mid s \ge r\}$. Similarly, we use the notations $S_{> r}, S_{\le r}$, and $S_{< r}$. For a positive rational $q$, the positive integers $a$ and $b$ with $q = a/b$ and $\gcd(a,b) = 1$ are denoted by $\mathsf{n}(q)$ and $\mathsf{d}(q)$, respectively. 
	\smallskip
	
	Let $f(X)\in\mathbb{Q}[X]$ be a nonzero polynomial. The \emph{support} of $f(X)$, denoted by $\text{supp} \, f(X)$, is the set of exponents of the monomials appearing in the canonical representation of $f(X)$; that is,
	\[
		\text{supp} \, f(X) := \{n \in \nn_0 : f^{(n)}(0) \neq 0 \},
	\]
	where $f^{(n)}(X)$ denotes the $n$-th formal derivative of $f(X)$. The \emph{order} of $f(X)$ is defined to be the minimum of $ \text{supp} \, f(X)$, which is the largest $d \in \nn_0$ such that $f(X) = X^d g(X)$ for some $g(X) \in \qq[X]$. Now suppose that $f(X)$ is monic. Let $\ell$ be the smallest positive integer such that $\ell \cdot f(X) \in \mathbb{Z}[X]$. Then there exist unique $p(X), q(X) \in \mathbb{N}_0[X]$ such that $\ell f(X) = p(X) - q(X)$ and that $p(X)$ and $q(X)$ share no monomials of the same degree (that is, the greatest common divisor of $p(X)$ and $q(X)$ in the free commutative monoid $(\N_0[X],+)$ is $0$). We call the pair $(p(X), q(X))$ the \emph{minimal pair} of $f(X)$. In addition, if $\alpha$ is a real algebraic number, the \emph{minimal pair of $\alpha$} is defined to be the minimal pair of its minimal polynomial over $\Q$.
	\smallskip
	
	According to Descartes’ Rule of Signs, the number of variations of sign of a polynomial $f(X) \in \rr[X]$ has the same parity as and is at least the number of positive roots of $f(X)$, counting multiplicity. Also, it was proved by D. R. Curtiss~\cite{dC18} that there exists a polynomial $\mu(X) \in \rr[X]$, which he called a Cartesian multiplier, such that the number of variations of sign of the product polynomial $\mu(X)f(X)$ equals the number of positive roots of $f(X)$, counting multiplicity.

	\medskip
	\subsection{Commutative Monoids and Factorizations}
	
	In the scope of this paper, a \emph{monoid} is a cancellative and commutative semigroup with an identity element. In addition, monoids here will be written additively unless we explicitly state otherwise. In particular, the identity element of a monoid is $0$. Let $M$ be a monoid. We set $M^\bullet = M \setminus \{0\}$. It is clear that $0$ is an invertible element of $M$. The monoid $M$ is called \emph{reduced} if $0$ is its only invertible element. For every $\alpha \in \rr_{> 0}$, the additive submonoid $\nn_0[\alpha]$ of $\rr_{\ge 0}$, which plays a fundamental role in this paper, is reduced. In general every additive submonoid of $\rr_{\ge 0}$ is reduced. Additive submonoids of $\qq_{\ge 0}$, also known as \emph{Puiseux monoids}, are also reduced; they will also be of importance in this paper. From this point on, we tacitly assume that all monoids here (that are not groups) are reduced.
	\smallskip
	
	For a subset $S$ of $M$, we let $\langle S \rangle$ denote the submonoid of $M$ \emph{generated} by $S$, that is, $\langle S \rangle$ is the intersection of all submonoids of $M$ containing $S$. We say that a monoid is \emph{finitely generated} if it can be generated by a finite set. The \emph{difference group} $\gp(M)$ of $M$ is the unique abelian group (up to isomorphism) such that any abelian group containing a homomorphic image of $M$ will also contain a homomorphic image of $\gp(M)$. If $M$ is an additive submonoid of $\rr_{\ge 0}$, then
	\[
		\gp(M) = \{r-s \mid r, s \in M\}.
	\]
	
	A nonzero element $a \in M$ is called an \emph{atom} if whenever $a = b+c$ for some $b,c \in M$ either $b = 0$ or $c = 0$. Following common notation, we let $\mathcal{A}(M)$ denote the set consisting of all atoms of $M$. Following \cite{pC68}, we say that $M$ is \emph{atomic} if every non-invertible element of $M$ can be expressed as a sum of atoms. For $b,c \in M$, we say that $b$ \emph{divides} $c$ in $M$ if there exists $b' \in M$ such that $c = b + b'$, in which case we write $b \mid_M c$. An element $p \in M^\bullet$ is called a \emph{prime} provided that for all $b,c \in M$, the fact that $p \mid_M b + c$ implies that either $p \mid_M b$ or $p \mid_M c$. It is not hard to verify that every prime is an atom. The converse does not hold in general; for instance, one can readily check that $N := \{0\} \cup \zz_{\ge 2}$ is an atomic monoid with $\mathcal{A}(N) = \{2,3\}$ even though $N$ does not contain any primes. A monoid is called an \emph{atom-prime} monoid or an \emph{AP-monoid} if every atom is prime.
	\smallskip
	
	In an atomic monoid, an element can be decomposed as a sum of atoms in multiple ways, and to understand these multiple atomic decompositions, the notion of a factorization plays a central role. Assume for the rest of this section that $M$ is an atomic monoid. We let $\mathsf{Z}(M)$ denote the free (commutative) monoid on $\mathcal{A}(M)$, that is, $\mathsf{Z}(M)$ is the monoid consisting of all formal sums of atoms of $M$. Now for each $x \in M$, we let $\mathsf{Z}(x)$ denote the set of all formal sums $z := a_1 +  \cdots + a_\ell$ in the free monoid $\mathsf{Z}(M)$ with $a_1, \dots, a_\ell \in \mathcal{A}(M)$ satisfying $a_1 + \dots + a_\ell = x$ in $M$. We then call $z$ a \emph{factorization} of $x$ and~$\ell$ the \emph{length} of $z$. The monoid $M$ is called a \emph{unique factorization monoid} (\emph{UFM}) if every nonzero element of $M$ has a unique factorization, that is, $|\mathsf{Z}(x)| = 1$ for all $x \in M^\bullet$. It is well known that a monoid is a UFM if and only if it is an atomic AP-monoid. Following Anderson, Anderson, and Zafrullah~\cite{AAZ90} and Halter-Koch~\cite{fHK92}, we say that $M$ is a \emph{finite factorization monoid} (\emph{FFM}) if $\mathsf{Z}(x)$ is a finite set for all $x \in M^\bullet$. Every UFM is clearly an FFM, and it follows from \cite[Proposition~2.7.8]{GH06} that every finitely generated monoid is an FFM. We denote the length of $z$ by~$|z|$, and then, for each $x \in M$, we set
	\[
		\mathsf{L}(x) := \{ |z| \mid z \in \mathsf{Z}(x) \}.
	\]
	The set $\mathsf{L}(x)$ will play an important role in the coming sections. Following Zaks~\cite{aZ76}, we say that $M$ is a \emph{half-factorial monoid} (\emph{HFM}) if any two factorizations of the the same nonzero element of $M$ have the same length, that is, $|\mathsf{L}(x)| = 1$ for all $x \in M^\bullet$. Note that every UFM is an HFM. Following~\cite{AAZ90} and~\cite{fHK92}, we say that $M$ is a \emph{bounded factorization monoid} (\emph{BFM}) if $\mathsf{L}(x)$ is a finite set for all $x \in M^\bullet$. Observe that every FFM is a BFM, and every HFM is a BFM. The bounded and finite factorization properties were recently surveyed in~\cite{AG21} in the context of integral domains. A brief introduction to factorization theory in commutative monoids can be found in~\cite{GZ20}, while an extensive background material on factorization theory in both commutative monoids and integral domains can be found in~\cite{GH06}.

\bigskip
\section{Primality}
\label{sec:omega-primality}

Let $M$ be an atomic monoid that is not a group. The \emph{omega-primality} of $a \in \mathcal{A}(M)$, denoted by $\omega(a)$, is the smallest $n \in \nn \cup \{\infty\}$ such that the following condition holds: if $a \mid_M \sum_{i=1}^k a_i$ for some $a_1, \dots, a_k \in \mathcal{A}(M)$, then there exists $S \subseteq \ldb 1, k \rdb$ with $|S| \le n$ such that $a \mid_M \sum_{i \in S} a_i$. Observe that an atom $a$ of $M$ is prime if and only if $\omega(a) = 1$. Therefore we can think of the omega-primality of an atom as a measure of how far it is from being a prime. The \emph{omega-primality} of $M$ is then defined as
\[
	\omega(M) := \sup \{\omega(a) \mid a \in \mathcal{A}(M)\}.
\]
 Therefore $\omega(M) = 1$ if and only if every atom of $M$ is prime, and so the omega-primality of $M$ measures how $M$ is from being an AP-monoid or a UFM. We say that~$M$ is an \emph{anti-prime} monoid provided that $\omega(a) = \infty$ for every $a \in \mathcal{A}(M)$; that is, atoms in~$M$ are as far from being prime as they can possibly be. In particular, an anti-prime monoid has infinite omega-primality.

It was proved in \cite[Thereom~5.6]{CGG20} that for any rational $q$ that is not an integer, if $\nn_0[q]$ is atomic, then its omega-primality is infinite. As the following theorem indicates, when $\nn_0[\alpha]$ is atomic and $\alpha$ is an algebraic number in the interval $(0,1)$, the monoid $\nn_0[\alpha]$ is an anti-prime monoid and, in particular, has infinite omega-primality.

\begin{theorem} \label{thm:omega primality}
	Let $\alpha \in \rr_{> 0}$ with $0 < \alpha < 1$ be an algebraic number such that the monoid $\nn_0[\alpha]$ is atomic. Then $\nn_0[\alpha]$ is an anti-prime monoid.
\end{theorem}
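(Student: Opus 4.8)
The plan is to prove $\omega(\alpha^j)=\infty$ for every $j\in\nn_0$. Writing $M:=\nn_0[\alpha]$, recall that $M$ is the additive monoid generated by $\{\alpha^n:n\in\nn_0\}$, so every atom of $M$ lies in this set (an atom of $\langle S\rangle$ must belong to $S$); the reverse inclusion follows from atomicity, so $\mathcal{A}(M)=\{\alpha^n:n\in\nn_0\}$ and the displayed claim is what we need. I will use repeatedly the elementary fact that, since $M\subseteq\rr_{\ge0}$, any atom occurring in a factorization of $\alpha^n$ is of the form $\alpha^m$ with $\alpha^m\le\alpha^n$, hence with $m\ge n$ because $0<\alpha<1$; thus refining a monomial expression $\sum_n c_n\alpha^n$ into an atomic factorization can only raise the exponents that appear.

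The heart of the argument is a single construction driven by the minimal pair $(p(x),q(x))$ of $\alpha$. Set $\gamma:=p(\alpha)=q(\alpha)$. Since $\ell\cdot m(0)=p(0)-q(0)$ is nonzero ($m$ being the minimal polynomial of $\alpha\ne0$, and $\ell$ as in the definition of the minimal pair) while $p$ and $q$ share no monomial, exactly one of $p,q$ has vanishing constant term; call it $h$, so its order $\operatorname{ord}(h)$ is at least $1$, and call the other one $g$, so $g(0)$ is a positive integer. For each integer $k\ge1$, consider the element $z_k:=\alpha^j\gamma^k$, which lies in $M$ because $z_k=\bigl(x^j p(x)^k\bigr)(\alpha)$ and $x^jp(x)^k\in\nn_0[x]$. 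I will read off two facts about $z_k$ from its two polynomial expressions. First, from $z_k=\bigl(x^j g(x)^k\bigr)(\alpha)$: the polynomial $x^jg(x)^k-x^j$ has nonnegative coefficients, since subtracting $x^j$ lowers only the coefficient of $x^j$, namely from $g(0)^k$ to $g(0)^k-1\ge0$; hence $z_k-\alpha^j\in M$, that is, $\alpha^j\mid_M z_k$. Second, from $z_k=\bigl(x^j h(x)^k\bigr)(\alpha)$: every monomial of $x^jh(x)^k$ has exponent at least $j+k\operatorname{ord}(h)\ge j+k$, so expanding $x^jh(x)^k=\sum_n c_nx^n$ and then factoring each $\alpha^n$ into atoms — which, by the observation above, introduces only atoms $\alpha^m$ with $m\ge n\ge j+k$ — yields a factorization $z_k=a_1+\cdots+a_\nu$ of $z_k$ in which every atom satisfies $a_i\le\alpha^{j+k}$.

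The bound on $\omega(\alpha^j)$ is then pure counting. If $S\subseteq\{1,\dots,\nu\}$ satisfies $\alpha^j\mid_M\sum_{i\in S}a_i$, then $\sum_{i\in S}a_i\ge\alpha^j$ (divisibility inside a submonoid of $\rr_{\ge0}$ forces this) while $\sum_{i\in S}a_i\le|S|\,\alpha^{j+k}$, so $|S|\ge\alpha^{-k}$. Since $\alpha^j\mid_M z_k=a_1+\cdots+a_\nu$, the factorization $(a_1,\dots,a_\nu)$ witnesses $\omega(\alpha^j)\ge\alpha^{-k}$. As $0<\alpha<1$, letting $k\to\infty$ gives $\omega(\alpha^j)=\infty$; since $j$ was arbitrary, $M$ is anti-prime.

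The step I expect to require the most care is the bookkeeping around $z_k$: one must use the representation $\bigl(x^jg(x)^k\bigr)(\alpha)$ to certify divisibility by $\alpha^j$ and the genuinely different polynomial representation $\bigl(x^jh(x)^k\bigr)(\alpha)$ to certify that $z_k$ factors into atoms of size at most $\alpha^{j+k}$, and one must check that passing from a monomial expression $\sum_n c_n\alpha^n$ to an honest atomic factorization does not break the size bound — which is precisely what the order-nondecreasing observation of the first paragraph guarantees. By comparison, the identification $\mathcal{A}(M)=\{\alpha^n\}$, the membership statements in $M$ (each reduced to displaying a polynomial with nonnegative coefficients), and the final estimate are routine; I would record the first two as brief preliminaries before carrying out the construction.
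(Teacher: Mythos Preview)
Your proof is correct and follows the same overall strategy as the paper: exhibit, for each $N$, an element of $M_N:=\langle\alpha^m:m\ge N\rangle$ that is divisible by $\alpha^j$, and then derive $\omega(\alpha^j)\ge\alpha^{-N+j}$ from the size bound $\sum_{i\in S}a_i\le|S|\,\alpha^N$. The endgame is identical to the paper's.

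Where you differ is in the construction of the witnessing element. The paper builds it inductively: starting from $\beta_0=\alpha^j$, at each step it replaces the lowest-exponent block $m\alpha^{j+n}$ by $m\alpha^{j+n}q(\alpha)=m\alpha^{j+n}p(\alpha)$ (with $\operatorname{ord}p\ge1$), thereby producing $\beta_{n+1}\in M_{j+n+1}$ while preserving divisibility by $\alpha^j$. You instead take the single explicit element $z_k=\alpha^j\gamma^k$ and read off both needed properties from its two polynomial representations $x^jg(x)^k$ and $x^jh(x)^k$. Your route is cleaner and avoids the induction entirely; the paper's route is a step-by-step rewriting that makes the mechanism (replace a low power using the minimal relation) more visible. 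Substantively they encode the same idea.

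One minor point: your assertion that ``the reverse inclusion follows from atomicity'' (i.e., that every $\alpha^n$ is an atom) is not justified as stated---atomicity says the atoms generate $M$, not that each generator is an atom. The paper invokes \cite[Theorem~4.1]{CG20} for the full equality $\mathcal{A}(M)=\{\alpha^n:n\in\nn_0\}$. Fortunately your argument does not actually need this direction: you only use $\mathcal{A}(M)\subseteq\{\alpha^n\}$ (to know atoms have the form $\alpha^m$) together with atomicity (to factor each $\alpha^n$ into finitely many such atoms, necessarily with $m\ge n$). Since every atom is some $\alpha^j$ and your bound applies to every such $j$, the anti-prime conclusion follows regardless of whether \emph{all} $\alpha^n$ are atoms.
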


\begin{proof}
	Let $p(X)$ and $q(X)$ be polynomials in $\nn_0[X]$ with $p(\alpha) = q(\alpha)$ such that $\text{ord} \, p(X) > 0$ and $\text{ord} \, q(X) = 0$; for instance, we can take $p(X)$ and $q(X)$ to be the polynomials in the minimal pair of $\alpha$. Since $\alpha$ is a positive algebraic number less than~$1$, it follows from \cite[Theorem~4.1]{CG20} that $\mathcal{A}(\nn_0[\alpha]) = \{\alpha^n \mid n \in \nn_0\}$. Set $M := \nn_0[\alpha]$ and, for each $n \in \nn$, we set
	\[
		M_n := \langle \alpha^j \mid j \in \nn_{\ge n} \rangle.
	\]
	Fix $k \in \nn_0$. We will argue by induction that, for each $n \in \nn_0$, there exists $\beta_n \in M_{k+n}$ such that $\alpha^k \mid_M \beta_n$. For $n=0$, we can take $\beta_0 = \alpha^k$. Now suppose that for $n \in \nn_0$, there exists $\beta_n \in M_{k+n}$ with $\alpha^k \mid_M \beta_n$. Take $m \in \nn_0$ such that $\beta_n = m \alpha^{k+n} + \beta'_{n+1}$ for some $\beta'_{n+1} \in M_{k+ (n+1)}$. Set $\beta_{n+1} := m \alpha^{k+n} q(\alpha) + \beta'_{n+1}$. Since $m \alpha^{k+n} q(\alpha) = m \alpha^{k+n} p(\alpha) \in M_{k + (n+1)}$, it follows that $\beta_{n+1} \in M_{k + (n+1)}$. In addition,
	\[
		\beta_{n+1} = m \alpha^{k+n}(q(\alpha) - 1) + (m \alpha^{k+n} + \beta'_{n+1}) = m \alpha^{k+n}(q(\alpha) - 1) + \beta_n, 
	\]
	and so $\beta_n \mid_M \beta_{n+1}$. Because $\alpha^k \mid_M \beta_n$, we obtain that $\alpha^k \mid_M \beta_{n+1}$, as desired.
	
	Let us show now that $\omega(\alpha^k) = \infty$ for every $k \in \nn_0$. To do this, suppose, by way of contradiction, that $\omega(\alpha^k) = K \in \nn$ for some $k \in \nn_0$. Take $N \in \nn$ large enough so that $K \alpha^N < \alpha^k$. By the argument given in the previous paragraph, we can take $x \in M_N$ such that $\alpha^k \mid_M x$. Since $x \in M_N$, we can write $x = a_1 + \dots + a_\ell$, where $a_1, \dots, a_\ell \in \{\alpha^j \mid j \in \nn_{\ge N}\}$. It follows from $\alpha^k \mid_M x$ and $\omega(\alpha^k) = K$ that there exists $S \subseteq \ldb 1, \ell \rdb$ with $|S| \le K$ such that $\alpha^k \mid_M \sum_{i \in S} a_i$. However, this would imply that $\alpha^k \le \sum_{i \in S} a_i \le |S| \alpha^N \le K \alpha^N < \alpha^k$, a contradiction. As a consequence, $\omega(\alpha^k) = \infty$ for every $k \in \nn_0$, from which we can conclude that $\nn_0[\alpha]$ is an anti-prime monoid.
\end{proof}

\begin{cor} \label{cor:infinite omega-primality}
	Let $\alpha \in \rr_{> 0}$ with $0 < \alpha < 1$ be an algebraic number such that $\nn_0[\alpha]$ is atomic. Then $\omega(\nn_0[\alpha]) = \infty$.
\end{cor}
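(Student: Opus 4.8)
The plan is to derive the statement directly from Theorem~\ref{thm:omega primality}. Since $\alpha$ is a positive algebraic number with $0 < \alpha < 1$ and $\nn_0[\alpha]$ is assumed atomic, Theorem~\ref{thm:omega primality} guarantees that $\nn_0[\alpha]$ is an anti-prime monoid; that is, $\omega(a) = \infty$ for every $a \in \mathcal{A}(\nn_0[\alpha])$. It then remains only to observe that $\mathcal{A}(\nn_0[\alpha]) \neq \emptyset$: indeed, $\nn_0[\alpha]$ is a reduced atomic monoid that is not a group (it contains $1$ but not $-1$), so it must have at least one atom—and in fact $\alpha^n \in \mathcal{A}(\nn_0[\alpha])$ for every $n \in \nn_0$ by \cite[Theorem~4.1]{CG20}. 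Taking the supremum in
\[
    \omega(\nn_0[\alpha]) = \sup\{\omega(a) \mid a \in \mathcal{A}(\nn_0[\alpha])\}
\]
over a nonempty set each of whose members equals $\infty$ yields $\omega(\nn_0[\alpha]) = \infty$.

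Alternatively, one could bypass the full strength of Theorem~\ref{thm:omega primality}: to conclude $\omega(\nn_0[\alpha]) = \infty$ it suffices to exhibit a single atom of infinite omega-primality, and the final paragraph of the proof of Theorem~\ref{thm:omega primality} already supplies this for $\alpha^k$ with any fixed $k \in \nn_0$. No step here presents a genuine obstacle; the only point requiring a moment of care is that the supremum defining $\omega(M)$ is taken over a nonempty index set, which is exactly what atomicity together with $\nn_0[\alpha]$ not being a group provides.
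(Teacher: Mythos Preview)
Your proposal is correct and follows exactly the intended route: the paper states Corollary~\ref{cor:infinite omega-primality} without a separate proof, relying on the remark preceding Theorem~\ref{thm:omega primality} that an anti-prime monoid has infinite omega-primality. Your added care about the nonemptiness of $\mathcal{A}(\nn_0[\alpha])$ is harmless and accurate, though the paper treats this as evident.
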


For an algebraic number $\alpha$ in the interval $(0,1)$ such that $(\nn_0[\alpha],+)$ is atomic, we have proved in Theorem~\ref{thm:omega primality} that every atom has infinite omega-primality. Even in the context of additive submonoids of $\qq_{\ge 0}$, we can find examples of non-finitely generated monoids whose omega-primality is finite. Our next example illustrates this observation. The \emph{conductor} $c(M)$ of a Puiseux monoid $M$ is defined by
\[
	c(M) := \inf \{ r \in \rr_{\ge 0} \cup \{\infty\} \mid M_{\ge r} = \text{gp}(M)_{\ge r} \}.
\]

\begin{example}
	Fix $q \in(1,2) \cap \qq$, and let $M_q$ denote the Puiseux monoid $\langle[1,q] \cap \qq \rangle$. We can readily check that $M_q$ is atomic with $\mathcal{A}(M_q) = [1,q] \cap \mathbb{Q}$. On the other hand, it is not hard to verify that the conductor of $M_q$ is finite. Moreover, if $c := c(M_q)$, then one can further argue that $c\in \nn$ and
	\[
		M_q = \{0\} \bigcup \bigg( \bigcup_{k=1}^{c-1} [k,kq] \cap \qq \bigg) \bigcup \, \big( [c,\infty) \cap \qq \big),
	\]
	where $kq < k+1$ for every $k \in \ldb 1, c-1 \rdb$ and $cq \ge c+1$. Observe that these inequalities guarantee that $c = \lceil \frac{1}{q-1} \rceil$.
	\smallskip
	
	We claim that $\omega(M)$ is finite. Proving this claim amounts to showing that the identity $\omega(a) = c+\lceil a \rceil$ holds for all $a \in \mathcal{A}(M_q)$. Fix $a \in \mathcal{A}(M_q)$ and set $n := \lceil a \rceil$ (note that $n = 1$ if $a = 1$, and $n = 2$ if $a > 1$). Then for any nonzero elements $q_1, \ldots, q_{c + n} \in M_q$, we have $q_1 + \dots + q_{c + n} \geq c + n$. This implies that $a \mid_{M_q} q_1 + \dots + q_{c + n}$ because $(q_1 + \dots + q_{c + n}) - a \geq c + (n - a) \ge c$. It follows that $\omega(a) \leq c + n$. To argue the reverse inequality, first observe that $c < 1 + \frac{1}{q-1}$, which implies that $(c-1)q + a < c+a$. On the other hand, the fact that $n q > a$ implies $\frac{(c-1)q + a}{c+ n -1} < q$, and it is clear that $\frac{c+a}{c + n - 1} > 1$. Then there exists an element $b \in M_q$ such that 
	\[
		\frac{(c-1)q + a}{c + n - 1} < b <\frac{c + a}{c + n - 1}.
	\]
	Consequently, $(c-1)q < (c + n - 1)b - a < c$, so $(c + n - 1)b - a \notin M_q$. As a consequence, $a \nmid_{M_q} (c + n - 1)b$. We then conclude from $a \mid_{M_q} (c + n)b$ that $\omega(a) = c + n$.
\end{example}
	
Based on Corollary~\ref{cor:infinite omega-primality} and \cite[Theorem~5.6]{CGG20}, we conclude this section with the following conjecture.

\begin{conj}
	Let $\alpha$ be a positive algebraic number such that $\nn_0[\alpha]$ is atomic. Then $\omega(\nn_0[\alpha]) < \infty$ if and only if $\alpha \in \nn$.
\end{conj}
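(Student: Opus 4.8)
The plan is to treat the two directions of the biconditional separately, with essentially all the difficulty in one of them. The direction $\alpha \in \nn \Rightarrow \omega(\nn_0[\alpha]) < \infty$ is immediate: when $\alpha$ is a positive integer, every value $f(\alpha)$ with $f \in \nn_0[x]$ lies in $\nn_0$, and the constant polynomials already exhaust $\nn_0$, so $\nn_0[\alpha] = \nn_0$; the latter is a UFM whose unique atom $1$ is prime, whence $\omega(\nn_0[\alpha]) = 1$. I would then concentrate entirely on the reverse direction, handled in contrapositive form: assuming $\alpha \notin \nn$ (and $\nn_0[\alpha]$ atomic), show that $\omega(\nn_0[\alpha]) = \infty$.

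For the contrapositive I would partition the possible values of $\alpha$ and clear two of the three regimes with results already available. If $0 < \alpha < 1$, then Corollary~\ref{cor:infinite omega-primality} already gives $\omega(\nn_0[\alpha]) = \infty$. Having disposed of that range, I may assume $\alpha \ge 1$; if in addition $\alpha$ is rational and $\alpha \notin \nn$, then $\alpha$ is a non-integer rational greater than $1$ and \cite[Theorem~5.6]{CGG20} shows $\nn_0[\alpha]$ is anti-prime, so again $\omega(\nn_0[\alpha]) = \infty$. This leaves exactly one outstanding regime, which carries the whole conjecture: $\alpha$ irrational algebraic with $\alpha > 1$.

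For this regime the natural approach is to imitate the mechanism of Theorem~\ref{thm:omega primality}. There, the witnesses certifying $\omega(\alpha^k) = \infty$ were produced by using a minimal-pair relation $p(\alpha) = q(\alpha)$ with $\text{ord}\,p > 0$ and $\text{ord}\,q = 0$ to move divisibility by a fixed atom into arbitrarily high powers $\alpha^j$, and then using that such powers are \emph{small} because $\alpha < 1$. When $\alpha > 1$ the powers grow, so one cannot shrink atoms in $\rr$ itself; the plan would be to replace the archimedean size by an auxiliary valuation that still detects divisibility --- for instance the absolute value of a Galois conjugate $\alpha'$ with $|\alpha'| < 1$ --- and rerun the ``shift weight into small atoms while preserving divisibility by $\alpha^k$'' argument against that valuation to force a contradiction $\alpha^k \le \sum_{i \in S} a_i < \alpha^k$ in the auxiliary scale.

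I expect this last regime to be the main obstacle, and for a structural rather than a technical reason: it genuinely contains counterexamples, so the plan cannot succeed in full generality. If $\alpha$ is an algebraic integer whose minimal polynomial $x^d + c_{d-1}x^{d-1} + \cdots + c_0$ has $c_i \le 0$ for all $i < d$ --- as for $\alpha = 1 + \sqrt{2}$, a root of $x^2 - 2x - 1$, or the golden ratio, a root of $x^2 - x - 1$ --- then each $\alpha^n$ rewrites as a \emph{nonnegative} integer combination of $1, \alpha, \dots, \alpha^{d-1}$, so $\nn_0[\alpha] = \langle 1, \alpha, \dots, \alpha^{d-1} \rangle$ is finitely generated. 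For $\alpha = 1 + \sqrt{2}$ one gets the free monoid $\nn_0[\alpha] \cong \nn_0^2$ on the atoms $1$ and $\alpha$, which is a UFM, so $\omega(\nn_0[\alpha]) = 1 < \infty$ even though $\alpha \notin \nn$; note that the conjugate $1 - \sqrt{2}$ is small in absolute value, yet the valuation heuristic of the previous paragraph still cannot help, because finite generation has already collapsed all the relations. Hence the reverse implication is false as stated. The honest outcome of this plan is therefore that the conjecture needs the finitely generated cases excluded: I would aim to prove it under the extra hypothesis that $\nn_0[\alpha]$ is not finitely generated (equivalently, by the main result of Section~\ref{sec:elasticity}, that its elasticity is infinite), since it is exactly that hypothesis which bars $1 + \sqrt{2}$, supplies the infinitely many atoms needed, and lets the auxiliary-valuation construction above have a chance to go through.
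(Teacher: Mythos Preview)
The paper offers no proof of this statement: it is posed as an open \emph{conjecture}, motivated by Corollary~\ref{cor:infinite omega-primality} and \cite[Theorem~5.6]{CGG20}, and left at that. So there is nothing to compare your argument against on the paper's side.

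Your analysis is correct and in fact goes beyond the paper. The forward direction and the two ``already-known'' subcases of the contrapositive are handled exactly as one would expect. More importantly, your counterexample $\alpha = 1+\sqrt{2}$ is valid and shows the conjecture is \emph{false} as stated: from $\alpha^2 = 2\alpha + 1$ one gets $\nn_0[\alpha] = \{m + n\alpha : m,n \in \nn_0\}$, and since $1,\alpha$ are $\qq$-linearly independent this is isomorphic to the free monoid $\nn_0^2$, hence a UFM with $\omega(\nn_0[\alpha]) = 1$ even though $\alpha \notin \nn$. (The golden ratio works identically.) This is entirely consistent with \cite[Theorem~5.4]{CG20}, quoted earlier in the paper, which already implies that $\nn_0[\alpha]$ is a UFM whenever $|\mathcal{A}(\nn_0[\alpha])|$ equals the degree of $m_\alpha$; the authors appear not to have noticed the tension between that result and the conjecture.

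Your proposed repair is also the right one. Since every reduced finitely generated monoid has finite omega-primality, and since $\alpha \in \nn$ forces $\nn_0[\alpha] = \nn_0$ to be finitely generated, the natural corrected statement is: $\omega(\nn_0[\alpha]) < \infty$ if and only if $\nn_0[\alpha]$ is finitely generated. One direction is then standard; the other---that infinite generation forces $\omega = \infty$---remains genuinely open for irrational $\alpha > 1$, and your Galois-conjugate heuristic is a reasonable line of attack but not yet a proof.
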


\bigskip
\section{Elasticity}
\label{sec:elasticity}

In this section we study the elasticity of the monoids $\nn_0[\alpha]$ for positive algebraic numbers $\alpha$. First, let us formally define the notion of elasticity. Let $M$ be an atomic monoid. The \emph{elasticity} of $x \in M^\bullet$, denoted by $\rho(x)$, is defined as
\[
	\rho(x) := \frac{\sup \mathsf{L}(x)}{\min \mathsf{L}(x)}.
\]
In addition, we set $\rho(M) := \sup \{\rho(x) \mid x \in M^\bullet\}$ and call it the \emph{elasticity} of $M$. Notice that $\rho(M) \in \rr_{\ge 1} \cup \{\infty \}$. Furthermore, observe that $\rho(M) = 1$ if and only if $M$ is an HFM. As a result, the elasticity provides a measurement for how far an atomic monoid is from being an HFM. The elasticity of $M$ is called \emph{accepted} if there is an element $x \in M^\bullet$ such that $\rho(M) = \rho(x)$.

\begin{theorem} \cite[Theorem~3.1.4]{GH06} \label{thm:reduced fg monoids have finite elasticity}
	Every reduced finitely generated monoid has accepted elasticity. 
\end{theorem}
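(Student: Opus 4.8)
The plan is to pass to the standard combinatorial model of factorizations and reduce the computation of $\rho(M)$ to an optimization over a finitely generated ``monoid of relations''. Since $M$ is finitely generated, its atom set $\mathcal{A}(M) = \{a_1, \dots, a_s\}$ is finite, $M$ is atomic (indeed an FFM, by \cite[Proposition~2.7.8]{GH06}), and $\mathsf{Z}(M) = \nn_0^s$ via the factorization homomorphism $\pi \colon \nn_0^s \to M$ with $\pi(\mathbf{e}_i) = a_i$, so that $\mathsf{Z}(x) = \pi^{-1}(x)$ for all $x \in M$; here the length of $z = (z_1, \dots, z_s)$ is $|z| = z_1 + \dots + z_s$. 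Because $M$ is reduced and each $a_i$ is non-invertible, $\pi(z) = 0$ forces $z = 0$. I would then work with
\[
	E := \{ (z, z') \in \nn_0^s \times \nn_0^s \mid \pi(z) = \pi(z') \},
\]
a submonoid of $\nn_0^{2s}$. If $(z, z') \in E \setminus \{0\}$, then $z \neq 0$ and $z' \neq 0$ by reducedness, so $x := \pi(z) = \pi(z') \in M^\bullet$ and $z, z' \in \mathsf{Z}(x)$; conversely, since $M$ is a BFM, $\rho(x) = \max \mathsf{L}(x)/\min \mathsf{L}(x)$ is realized as $|z|/|z'|$ for suitable $z, z' \in \mathsf{Z}(x)$. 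These two observations give
\[
	\rho(M) = \sup \Big\{ \frac{|z|}{|z'|} \,\Big|\, (z, z') \in E \setminus \{0\} \Big\},
\]
and moreover, if this supremum is attained at some $(z_0, z_0')$, then $\rho(\pi(z_0)) = \rho(M)$, so it suffices to show the supremum is attained.

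The next step is to show $E$ is finitely generated. By Dickson's lemma the set $E \setminus \{0\}$ has only finitely many minimal elements $w_1, \dots, w_t$ with respect to the componentwise partial order on $\nn_0^{2s}$. Using the cancellativity of $M$, one checks that whenever $(z, z') \in E$ dominates $w_j = (u_j, v_j)$ componentwise, the difference $(z - u_j, z' - v_j)$ again lies in $E$; a routine induction on $|z| + |z'|$ then shows that $w_1, \dots, w_t$ generate $E$ as a monoid. (Alternatively, $E$ is the monoid of nonnegative integer solutions of a finite system of linear equations and congruences over $\zz$, hence finitely generated by Gordan's lemma.) Each generator $w_j$ is nonzero, so by reducedness $|u_j| > 0$ and $|v_j| > 0$ for every $j$.

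Finally, write any $(z, z') \in E \setminus \{0\}$ as $(z, z') = \sum_{j=1}^{t} c_j w_j$ with $c_j \in \nn_0$ not all zero. Since both coordinate projection and the length map are additive, $|z| = \sum_j c_j |u_j|$ and $|z'| = \sum_j c_j |v_j|$, and because each $|v_j| > 0$ the mediant inequality yields
\[
	\frac{|z|}{|z'|} = \frac{\sum_j c_j |u_j|}{\sum_j c_j |v_j|} \le \max_{1 \le j \le t} \frac{|u_j|}{|v_j|}.
\]
Hence $\rho(M) \le \max_j |u_j|/|v_j|$. Picking $j_0$ attaining this maximum and setting $x_0 := \pi(u_{j_0}) = \pi(v_{j_0}) \in M^\bullet$ gives $u_{j_0}, v_{j_0} \in \mathsf{Z}(x_0)$, so $\rho(x_0) \ge |u_{j_0}|/|v_{j_0}| = \max_j |u_j|/|v_j| \ge \rho(M) \ge \rho(x_0)$. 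Therefore $\rho(x_0) = \rho(M)$, the elasticity is accepted, and as a byproduct $\rho(M) \in \qq$.

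I expect the main obstacle to be the finite generation of $E$; everything else is bookkeeping. This is precisely where cancellativity (needed to subtract a generator and remain inside $E$) and Dickson's/Gordan's lemma come in — for non-cancellative or non-reduced monoids the argument breaks down. A secondary point that needs care is ensuring that no generator $w_j$ has a vanishing length-coordinate, so that the ratios $|u_j|/|v_j|$ are genuinely defined and positive; this is exactly the role of the reducedness hypothesis.
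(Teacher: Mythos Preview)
The paper does not give a proof of this statement at all: it is quoted verbatim as \cite[Theorem~3.1.4]{GH06} and used as a black box. Your argument is correct and is essentially the classical proof one finds in that reference, namely passing to the finitely generated kernel congruence $E \subseteq \nn_0^{2s}$ via Dickson's lemma and then bounding $\rho(M)$ by the maximal length ratio among the finitely many minimal relations using the mediant inequality. So there is nothing to compare against in the paper itself; your proposal simply supplies the standard proof that the paper chose to cite rather than reproduce.
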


It was proved in~\cite[Theorem~5.4]{CG20} that for any positive algebraic number $\alpha$, the equality $\rho(\nn_0[\alpha]) = 1$ holds  if and only if $\nn_0[\alpha]$ is a UFM, which happens precisely when the degree of the minimal polynomial of $\alpha$ equals $|\mathcal{A}(\nn_0[\alpha])|$. This seems to be the only fact known so far about the elasticity of the monoids $\nn_0[\alpha]$. In this section, we offer some insights into the elasticity of the monoids $\nn_0[\alpha]$.
\smallskip

Let us provide, for any positive algebraic number $\alpha$, a necessary condition for the elasticity of $\nn_0[\alpha]$ to be finite.

\begin{prop} \label{prop:elasticity}
	Let $\alpha \in \rr_{> 0}$ be an algebraic number such that $\nn_0[\alpha]$ is atomic. Then $\rho(\nn_0[\alpha]) < \infty$ if and only if $\nn_0[\alpha]$ is finitely generated.
\end{prop}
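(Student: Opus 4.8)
The plan is to establish the two implications separately. The direction ``$\nn_0[\alpha]$ finitely generated $\Rightarrow$ $\rho(\nn_0[\alpha]) < \infty$'' is immediate: since $\nn_0[\alpha]$ is an additive submonoid of $\rr_{\ge 0}$ it is reduced, so if it is finitely generated then Theorem~\ref{thm:reduced fg monoids have finite elasticity} gives $\rho(\nn_0[\alpha]) = \rho(x_0)$ for some $x_0 \in \nn_0[\alpha]^\bullet$; and since a finitely generated monoid is an FFM, hence a BFM, the set $\mathsf{L}(x_0)$ is finite, whence $\rho(x_0) = \max \mathsf{L}(x_0)/\min \mathsf{L}(x_0) < \infty$. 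Thus $\rho(\nn_0[\alpha]) < \infty$.

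For the other implication I would argue the contrapositive: supposing $\nn_0[\alpha]$ is not finitely generated, I would show $\rho(\nn_0[\alpha]) = \infty$. First note $\alpha \notin \nn$, since otherwise $\nn_0[\alpha] = \nn_0$ is finitely generated; in particular $\alpha \ne 1$. The key preliminary claim is that $\mathcal{A}(\nn_0[\alpha]) = \{\alpha^n \mid n \in \nn_0\}$. For $0 < \alpha < 1$ this is exactly \cite[Theorem~4.1]{CG20}. For $\alpha > 1$ I would argue by contradiction: every atom of $\nn_0[\alpha]$ is a power of $\alpha$ (each element is a sum of such powers, and a sum of two or more of them cannot be an atom), so if some $\alpha^N$ is not an atom, then atomicity lets us write $\alpha^N = \sum_{i=1}^{\ell}\alpha^{m_i}$ with $\ell \ge 2$; since $\alpha > 1$, each summand is strictly less than $\alpha^N$, forcing $m_i < N$ for every $i$. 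Putting $g(X) := \sum_{i=1}^{\ell}X^{m_i} \in \nn_0[X]$, we get $\deg g < N$ and $\alpha^N = g(\alpha)$, and then a routine induction on $m$ shows $\alpha^m \in \langle \alpha^0,\dots,\alpha^{N-1}\rangle$ for all $m \in \nn_0$; hence $\nn_0[\alpha] = \langle \alpha^0,\dots,\alpha^{N-1}\rangle$ is finitely generated, a contradiction.

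Granting this description of the atoms, the conclusion follows quickly from the minimal pair. Let $(p(X),q(X))$ be the minimal pair of $\alpha$ and $m(X)$ its minimal polynomial, so that $p(\alpha) = q(\alpha)$ and $\ell\, m(X) = p(X) - q(X)$ for the least $\ell \in \nn$ with $\ell\, m(X) \in \zz[X]$; here $p(X)$ and $q(X)$ are nonzero, since $p(\alpha)=q(\alpha)$ and a nonzero element of $\nn_0[X]$ evaluated at $\alpha>0$ is positive, and $\ell\, m \ne 0$. Since $\alpha \ne 1$ we have $m(1)\ne 0$, so $p(1)-q(1) = \ell\, m(1) \ne 0$, while $p(1),q(1)\in\nn$. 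For $k \in \nn$ put $w_k := p(\alpha)^k = q(\alpha)^k \in \nn_0[\alpha]$. Expanding $p(X)^k$ and $q(X)^k$ in $\nn_0[X]$ and evaluating at $\alpha$ produces two factorizations of $w_k$ into atoms---valid precisely because every $\alpha^m$ is an atom---of lengths $p(1)^k$ and $q(1)^k$. Consequently,
\[
	\rho(w_k) \;\ge\; \frac{\max\{p(1),q(1)\}^k}{\min\{p(1),q(1)\}^k} \;=\; \Big(\frac{\max\{p(1),q(1)\}}{\min\{p(1),q(1)\}}\Big)^{\!k},
\]
which tends to $\infty$ as $k\to\infty$ since $p(1)\ne q(1)$. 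Therefore $\rho(\nn_0[\alpha]) = \infty$, as desired.

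The step I expect to demand the most care is the atom description in the case $\alpha > 1$: one must verify that a power $\alpha^N$ which is not an atom genuinely decomposes into \emph{strictly smaller} powers of $\alpha$ (this is exactly where the hypothesis $\alpha > 1$ enters), and that the resulting identity $\alpha^N = g(\alpha)$ with $g \in \nn_0[X]$ of degree below $N$ really does propagate to every higher power and force finite generation. Once this structural input is secured, the construction of the elements $w_k = p(\alpha)^k = q(\alpha)^k$ and the ensuing elasticity blow-up are routine.
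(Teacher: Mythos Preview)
Your proof is correct and follows essentially the same route as the paper: the easy direction via Theorem~\ref{thm:reduced fg monoids have finite elasticity}, and the contrapositive via the minimal pair $(p,q)$, noting $p(1)\ne q(1)$ and exhibiting the elements $p(\alpha)^k=q(\alpha)^k$ whose elasticities blow up. The only difference is cosmetic: the paper obtains $\mathcal{A}(\nn_0[\alpha])=\{\alpha^n\mid n\in\nn_0\}$ directly from \cite[Theorem~4.1]{CG20} in all cases, whereas you supply your own (correct) argument for $\alpha>1$.
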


\begin{proof}
	It follows from \cite[Proposition~2.7.8]{GH06} that every finitely generated monoid is an FFM and so a BFM. Therefore, if $\nn_0[\alpha]$ is finitely generated, then $\rho(\beta) < \infty$ for all $\beta \in \nn_0[\alpha]^\bullet$. In particular, by Theorem~\ref{thm:reduced fg monoids have finite elasticity}, every reduced finitely generated monoid has finite elasticity. Thus, the reverse implication follows.
	
	For the direct implication, assume that $\nn_0[\alpha]$ is not finitely generated. Then it follows from \cite[Theorem~4.1]{CG20} that $\mathcal{A}(\nn_0[\alpha]) = \{\alpha^n \mid n \in \nn_0\}$. Since $\nn_0[\alpha]$ is not finitely generated, $\alpha \notin \nn$. We will construct a sequence $(\beta_n)_{n \in \N}$ with $\beta_n \in \nn_0[\alpha]$ for every $n \in \nn$ such that $\sup \{\rho(\beta_n) \mid n \in \nn)\} = \infty$. Let $(p(X), q(X))$ be the minimal pair of $\alpha$. Then $z_1 := p(\alpha)$ and $z_2 := q(\alpha)$ are two distinct factorizations of the same nonzero element $\beta_1 \in \nn_0[\alpha]$ and have lengths $p(1)$ and $q(1)$, respectively. Since $1$ is not a root of the minimal polynomial of $\alpha$, we see that $p(1) \neq q(1)$, which means $z_1$ and $z_2$ have different lengths. Without loss of generality, suppose that $|z_1| < |z_2|$. For each $n \in \nn$, set $\beta_n = \beta_1^n.$ Then, for every $n \in \nn$, both $p(\alpha)^n$ and $q(\alpha)^n$ yield factorizations of $\beta_n$ whose lengths are $p(1)^n$ and $q(1)^n$, respectively. Therefore
	\begin{equation} \label{eq:elasticity bound}
		\rho(\nn_0[\alpha]) \ge \rho(\beta_n) = \frac{\sup \mathsf{L}(\beta_n)}{\min \mathsf{L}(\beta_n)} \ge \frac{q(1)^n}{p(1)^n} =  \bigg( \frac{q(1)}{p(1)} \bigg)^n
	\end{equation}
	for every $n \in \nn$. Since $q(1)/p(1) > 1$, taking the limits of both sides of~\eqref{eq:elasticity bound}, we see that $\rho(\nn_0[\alpha]) = \infty$, which concludes the proof.
\end{proof}
	
\begin{cor}
	Let $\alpha \in \rr_{> 0}$ be an algebraic number such that $\alpha < 1$. If $\nn_0[\alpha]$ is atomic, then $\rho(\nn_0[\alpha]) = \infty$.
\end{cor}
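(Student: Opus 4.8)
The plan is to deduce this corollary directly from Proposition~\ref{prop:elasticity}, so the only real content is to show that $\nn_0[\alpha]$ is \emph{not} finitely generated whenever $\alpha$ is an algebraic number with $0 < \alpha < 1$ and $\nn_0[\alpha]$ is atomic. Once that is established, Proposition~\ref{prop:elasticity} immediately yields $\rho(\nn_0[\alpha]) = \infty$.

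To see that $\nn_0[\alpha]$ is not finitely generated, I would first invoke \cite[Theorem~4.1]{CG20} (already used in the proof of Proposition~\ref{prop:elasticity}), which tells us that under the stated hypotheses $\mathcal{A}(\nn_0[\alpha]) = \{\alpha^n \mid n \in \nn_0\}$. Since $0 < \alpha < 1$, the powers $\alpha^n$ are strictly decreasing and pairwise distinct, so this is an infinite set of atoms. A reduced atomic monoid whose atom set is infinite cannot be finitely generated: any generating set must contain every atom (an atom lying outside a generating set $S$ could not be written as a sum of elements of $S$ in a reduced monoid, since any such sum of length $\ge 2$ is not an atom, and a sum of length $1$ would force the atom to be in $S$), so a finite generating set would force finitely many atoms. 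Hence $\nn_0[\alpha]$ is not finitely generated, and Proposition~\ref{prop:elasticity} applies.

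Alternatively, and perhaps more transparently, one can simply observe that $\alpha < 1$ forces $\alpha \notin \nn$, and then run essentially the same construction as in the direct implication of Proposition~\ref{prop:elasticity}: taking the minimal pair $(p(X), q(X))$ of $\alpha$ and the powers $\beta_n := (p(\alpha))^n = (q(\alpha))^n$, one gets $\rho(\beta_n) \ge (q(1)/p(1))^n$ with $q(1)/p(1) \ne 1$, and letting $n \to \infty$ gives $\rho(\nn_0[\alpha]) = \infty$. Either route works; I would favor the first, since it is a one-line appeal to the proposition once non-finite-generation is noted.

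I do not anticipate a genuine obstacle here, as the statement is a corollary in the literal sense. The only point requiring a small amount of care is confirming that the $\alpha < 1$ hypothesis (together with atomicity) really does put us in the non-finitely-generated case — this is exactly where \cite[Theorem~4.1]{CG20} is needed, to rule out the possibility that $\nn_0[\alpha]$ has only finitely many atoms and is generated by them. Everything else is immediate from Proposition~\ref{prop:elasticity}.
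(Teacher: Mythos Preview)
Your proposal is correct and follows essentially the same route as the paper: argue that $\nn_0[\alpha]$ is not finitely generated and then invoke Proposition~\ref{prop:elasticity}. The paper's justification for non-finite-generation is slightly more elementary---it simply observes that since $\alpha<1$ the powers $\alpha^n$ accumulate at $0$, so $0$ is a limit point of $\nn_0[\alpha]^\bullet$, which already rules out finite generation without appealing to \cite[Theorem~4.1]{CG20}.
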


\begin{proof}
	Suppose that $\nn_0[\alpha]$ is atomic. If $\alpha < 1$, then $0$ is a limit point of $\nn_0[\alpha]^\bullet$ and, therefore, $\nn_0[\alpha]$ cannot be finitely generated. Hence it follows from Proposition~\ref{prop:elasticity} that $\rho(\nn_0[\alpha]) = \infty$.
\end{proof}

In general, it seems significantly difficult to establish a general formula for the elasticity of $\nn_0[\alpha]$ when this monoid is finitely generated. However, when $|\mathcal{A}(\nn_0[\alpha])| = \deg m_{\alpha}(X)+1$, we are able to provide a formula for the elasticity.

\begin{prop}
	Let $\alpha$ be a positive algebraic number with minimal polynomial given by $m_{\alpha}(X)\coloneqq X^n + \sum_{i=0}^{n-1} a_i X^i$, where $a_0, \dots, a_{n-1} \in \zz$. Let $\left(p(X),q(X)\right)$ be the minimal pair of $m_\alpha(X)$. If $|\mathcal{A}(\nn_0[\alpha])| = \deg m_{\alpha}(X)+1$, then
	\[
		\rho(\nn_0[\alpha])\ =\ \max\left\{\frac{p(1)}{q(1)},\frac{q(1)}{p(1)}\right\}.
	\]
\end{prop}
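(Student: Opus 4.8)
The plan is to exploit the hypothesis $|\mathcal{A}(\nn_0[\alpha])| = \deg m_\alpha(X) + 1 = n+1$, which (by \cite[Theorem~4.1]{CG20}) forces $\mathcal{A}(\nn_0[\alpha]) = \{1, \alpha, \dots, \alpha^n\}$ and, crucially, means that $\nn_0[\alpha]$ is finitely generated; so by Proposition~\ref{prop:elasticity} and Theorem~\ref{thm:reduced fg monoids have finite elasticity} the elasticity is finite and accepted. The key structural fact is that, because there are exactly $n+1$ atoms and they satisfy the single linear relation coming from $m_\alpha(\alpha) = 0$, the ``relation lattice'' of the monoid is as small as possible: every relation among the atoms is an integer multiple of the one relation $p(\alpha) = q(\alpha)$, where $(p(X), q(X))$ is the minimal pair. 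I would make this precise by passing to the kernel congruence: the monoid homomorphism $\pi \colon \nn_0^{n+1} \to \nn_0[\alpha]$ sending $e_i \mapsto \alpha^i$ has kernel congruence generated (as a congruence) by the single pair $(\mathbf{p}, \mathbf{q})$, where $\mathbf{p}, \mathbf{q} \in \nn_0^{n+1}$ are the coefficient vectors of $p(X)$ and $q(X)$. This should follow because the lattice of relations $\{\mathbf{u} - \mathbf{v} : \pi(\mathbf{u}) = \pi(\mathbf{v})\} \subseteq \zz^{n+1}$ is the set of integer vectors orthogonal to $(1, \alpha, \dots, \alpha^n)$ over $\qq$ (an intersection with a rank-$n$ subspace, hence rank $1$), and $\mathbf{p} - \mathbf{q}$ generates it because $m_\alpha$ is the \emph{minimal} polynomial (any smaller relation would give a lower-degree annihilating polynomial with the appropriate sign pattern, contradicting minimality after clearing denominators — here the minimal-pair construction does exactly the bookkeeping).

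Once the kernel congruence is known to be generated by the single swap $\mathbf{p} \leftrightarrow \mathbf{q}$, any two factorizations $z, z'$ of the same element $\beta$ are connected by a finite chain of moves, each of which replaces some copies of the atoms in $\supp p$ by the corresponding copies of the atoms in $\supp q$ (or vice versa). A single such move changes the length by exactly $\pm(q(1) - p(1))$; thus $|z| \equiv |z'| \pmod{|q(1) - p(1)|}$ is not quite what I want — rather, the point is that if $z$ is obtained from $z'$ by $k$ forward moves and $m$ backward moves then $|z| - |z'| = (k - m)(q(1) - p(1))$, so $\mathsf{L}(\beta)$ is an arithmetic progression with common difference $|q(1) - p(1)|$, and $\sup \mathsf{L}(\beta) / \min \mathsf{L}(\beta)$ is maximized by making the ratio of the extreme lengths as large as possible. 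To get the extremal ratio I would take $\beta = \beta_1^N$ where $\beta_1 = p(\alpha) = q(\alpha)$: then $p(\alpha)^N$ and $q(\alpha)^N$ are genuine factorizations of lengths $p(1)^N$ and $q(1)^N$, giving $\rho(\beta) \ge \max\{p(1), q(1)\}^N / \min\{p(1), q(1)\}^N$. This shows $\rho(\nn_0[\alpha]) \ge \max\{p(1)/q(1), q(1)/p(1)\}$ — wait, this actually shows it is $\infty$ unless $p(1) = q(1)$, which cannot be since $1$ is not a root of $m_\alpha$. So I must be more careful: this is exactly the argument of Proposition~\ref{prop:elasticity}, which shows a \emph{non}-finitely-generated monoid has infinite elasticity; under the present finiteness hypothesis that construction is unavailable because $p(\alpha)^N$ and $q(\alpha)^N$, while factorizations, do not have the claimed lengths once cancellations among the $n+1$ atoms are forced — I need to recheck that $p(\alpha)^N$ is actually a product of atoms in the monoid, which it is, but its length in $\nn_0[\alpha]$ might not be $p(1)^N$ if reduction occurs. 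The resolution: compute $\rho$ directly from the congruence structure rather than by guessing elements.

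Concretely, for the \textbf{upper bound} I would argue as follows. Write any factorization as a vector $\mathbf{u} \in \nn_0^{n+1}$ with $\pi(\mathbf{u}) = \beta$; its length is $|\mathbf{u}| = \sum u_i$. Since the kernel congruence is generated by $(\mathbf{p}, \mathbf{q})$, the set of all such $\mathbf{u}$ for fixed $\beta$ is obtained from any one of them by repeatedly adding or subtracting $\mathbf{p} - \mathbf{q}$ while staying in $\nn_0^{n+1}$ — i.e., it is (the nonnegative-lattice-point set of) a line segment in the direction $\mathbf{p} - \mathbf{q}$. Parametrize: $\mathbf{u}_t = \mathbf{u}_0 + t(\mathbf{p} - \mathbf{q})$ for $t$ in some integer interval $[t_-, t_+]$; then $|\mathbf{u}_t| = |\mathbf{u}_0| + t(p(1) - q(1))$ is linear in $t$, so $\mathsf{L}(\beta) = \{|\mathbf{u}_0| + t(p(1)-q(1)) : t_- \le t \le t_+\}$. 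WLOG $p(1) > q(1)$ (rename $p, q$); then $\min \mathsf{L}(\beta) = |\mathbf{u}_{t_-}|$ and $\max\mathsf{L}(\beta) = |\mathbf{u}_{t_+}|$, and the ratio is largest when $|\mathbf{u}_{t_-}|$ is as small as possible relative to the $t$-range. The boundary constraints $\mathbf{u}_{t_-} \ge 0$, $\mathbf{u}_{t_+} \ge 0$ componentwise say: $t_-$ is limited by the coordinates where $\mathbf{p} - \mathbf{q}$ is negative (i.e.\ $\supp q$), so that $(\mathbf{u}_{t_-})_i = 0$ for some $i \in \supp q$; and $t_+$ is limited by a coordinate in $\supp p$. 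The worst case (largest ratio) is a $\beta$ for which both extremes are tight with the smallest possible footprint: take $\mathbf{u}_{t_-} = \mathbf{q}$ itself at $t_- = 0$ (so $\beta = q(\alpha) = p(\alpha) = \beta_1$), giving $\min \mathsf{L}(\beta_1) \le q(1)$ and $\max \mathsf{L}(\beta_1) = p(1)$ (since at $t = 1$ we get $\mathbf{u} = \mathbf{p} \ge 0$ and no further increase is possible once we'd go negative in $\supp q$ coordinates — here I use that $\mathbf{p}, \mathbf{q}$ have \emph{disjoint support}, so $\mathbf{q} + 2(\mathbf{p} - \mathbf{q}) = 2\mathbf{p} - \mathbf{q}$ has a negative entry). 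Thus $\rho(\beta_1) = p(1)/q(1)$. For a general $\beta$ the range of $t$ can be longer but the ratio $\max/\min = (|\mathbf{u}_{t_-}| + (t_+ - t_-)(p(1) - q(1)))/|\mathbf{u}_{t_-}|$ only increases the numerator additively while $|\mathbf{u}_{t_-}| \ge q(1) \cdot(\text{something} \ge 1)$ grows at least proportionally — a short convexity/monotonicity estimate, using that each unit of $t$-range costs at least $q(1)$ in the minimum-length footprint (because moving backward one more step from $\mathbf{u}_{t_-}$ would require $\ge 1$ in each $\supp q$ coordinate, i.e.\ $|\mathbf{u}_{t_-}| \ge (t_+ - t_-) q(1)$ roughly), pins down $\rho(\beta) \le p(1)/q(1)$. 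Combining with the lower bound $\rho(\beta_1) = p(1)/q(1) = \max\{p(1)/q(1), q(1)/p(1)\}$ finishes the proof.

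\textbf{The main obstacle} I anticipate is the upper bound in the last paragraph: turning the geometric picture ``the fiber is a lattice segment in direction $\mathbf{p} - \mathbf{q}$ and longer segments have proportionally longer minimum-length footprint because of the disjoint-support constraint'' into a clean inequality $\max \mathsf{L}(\beta)/\min\mathsf{L}(\beta) \le p(1)/q(1)$ uniformly over $\beta$. The precise bookkeeping is: if the fiber has $t$ ranging over $[t_-, t_+]$ then at the minimizing endpoint some $\supp q$-coordinate is zero, forcing $|\mathbf{u}_{t_-}| \ge (t_+ - t_-)\cdot q(1)$ — no, that's not immediate either, since only \emph{one} coordinate need vanish. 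The correct statement uses that $\mathbf{u}_{t_+} = \mathbf{u}_{t_-} + (t_+ - t_-)(\mathbf{p} - \mathbf{q}) \ge 0$ forces $(\mathbf{u}_{t_-})_i \ge (t_+ - t_-) q_i$ for every $i \in \supp q$, hence $|\mathbf{u}_{t_-}| \ge \sum_{i \in \supp q}(\mathbf{u}_{t_-})_i \ge (t_+ - t_-) q(1)$, which is exactly what's needed: then $\max/\min = 1 + (t_+ - t_-)(p(1) - q(1))/|\mathbf{u}_{t_-}| \le 1 + (p(1) - q(1))/q(1) = p(1)/q(1)$. Once this inequality is in hand the rest is the routine lower bound via $\beta_1$, so I'd expect the whole argument to be short, with the one genuine idea being the identification of the kernel congruence and the disjoint-support property of the minimal pair.
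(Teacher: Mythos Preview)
Your argument is correct, and it takes a genuinely different route from the paper's. The paper argues as follows: since $\nn_0[\alpha]$ is reduced and finitely generated, its elasticity is accepted, so one can pick nonzero $f,g\in\nn_0[X]$ of degree at most $n$ with $f(\alpha)=g(\alpha)$ realizing $\rho(\nn_0[\alpha])=\max\{f(1)/g(1),g(1)/f(1)\}$. If $f$ and $g$ shared a monomial, subtracting it would strictly increase the ratio, contradicting optimality; so their supports are disjoint. Since $f(X)-g(X)$ has degree at most $n$ and vanishes at $\alpha$, it equals $c\,m_\alpha(X)$ for some $c\in\zz$ (using that $m_\alpha$ is monic, hence primitive), and matching coefficients via the disjoint-support conditions on both $(f,g)$ and $(p,q)$ forces $f=cp$, $g=cq$, giving the formula.

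By contrast, you identify the full relation lattice $\{\mathbf{u}-\mathbf{v}:\pi(\mathbf{u})=\pi(\mathbf{v})\}\subset\zz^{n+1}$ as $\zz(\mathbf{p}-\mathbf{q})$ (again using that $m_\alpha$ is monic), parametrize each factorization fiber as a lattice segment $\{\mathbf{u}_0+t(\mathbf{p}-\mathbf{q}):t_-\le t\le t_+\}$, and bound $\rho(\beta)$ directly: the nonnegativity constraint at $t_+$ forces $(\mathbf{u}_{t_-})_i\ge(t_+-t_-)q_i$ for each $i\in\supp q$, hence $|\mathbf{u}_{t_-}|\ge(t_+-t_-)q(1)$, from which $\rho(\beta)\le p(1)/q(1)$ follows for \emph{every} $\beta$. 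This avoids the accepted-elasticity black box altogether and yields more: it shows each $\mathsf{L}(\beta)$ is an arithmetic progression with common difference $|p(1)-q(1)|$, and it exhibits explicitly that the elasticity is accepted at $\beta_1=p(\alpha)=q(\alpha)$. The paper's proof is shorter and slicker; yours is more constructive and extracts finer structural information. Both hinge on the same two facts---that the relation lattice has rank one and that the minimal pair has disjoint support---but deploy them differently.

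One cosmetic point: your detour through $\beta_1^N$ and the self-correction about kernel congruences versus relation lattices could be trimmed; the final argument only needs the relation-lattice description, and the lower bound only needs $\beta_1$ itself (not its powers).
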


\begin{proof}

Since $|\mathcal{A}(\nn_0[\alpha])| = \deg m_{\alpha}(X)+1$, it follows from \cite[Theorem 4.1]{CG20} that $\nn_0[\alpha]$ is atomic with $\mathcal{A}(\nn_0[\alpha]) = \{ 1, \alpha, \alpha^2, \ldots, \alpha^n\}$. Thus, $\nn_0[\alpha]$ is finitely generated, and so \cite[Proposition~2.7.8]{GH06} guarantees that $\nn_0[\alpha]$ is an FFM. On the other hand, from the condition $|\mathcal{A}(\nn_0[\alpha])| = \deg m_{\alpha}(X)+1$, we see that $\alpha \ne 1$, which implies $p(1)\ne q(1)$. Without loss of generality, we can assume $p(1) < q(1)$. Since $\nn_0[\alpha]$ is finitely generated, Theorem~\ref{thm:reduced fg monoids have finite elasticity} allows us to choose two nonzero polynomials $f,g \in \nn_0[X]$ with $\max \{\deg f(X), \deg g(X)\} \le n$ and $f(\alpha) = g(\alpha)$ such that
\[
	 \rho(\nn_0[\alpha]) =\max \left\{  \frac{f(1)}{g(1)}, \frac{g(1)}{f(1)} \right\} < \infty,
\]
where the last inequality follows from the fact that $\nn_0[\alpha]$ is an FFM. Suppose, for a contradiction, that $f$ and $g$ share a common term, namely $X^k$. Then the polynomials $f_0(X) := f(X) - X^k$ and $g_0(X) := g(X) - X^k$ are both in $\nn_0[X]$, and so $f_0(\alpha)$ and $g_0(\alpha)$, seen as formal sums, give two factorizations of the same element of $\nn_0[\alpha]$. Therefore if $f(1) > g(1)$, then
\[
	\rho(\nn_0[\alpha]) \ge \frac{f_0(1)}{g_0(1)} = \frac{f(1) - 1}{g(1) - 1} > \frac{f(1)}{g(1)},
\]
which contradicts the selection of $f$ and $g$. In a similar way, we can obtain a contradiction when $g(1) > f(1)$. Thus, we conclude that $f$ and $g$ do not share any common term.

Now we have $m_\alpha(X) = p(X) - q(X)$ and $f(X) - g(X) = c m_\alpha(X)$ for some $c\in \mathbb{Z}$. Interchanging the roles of $f$ and $g$ if necessary, we can assume that $c \in \nn_0$. The equality $f(X) + c q(X) = g(X) + c p(X)$, along with the fact that $f$ and $g$ do not share any common term, guarantees that $f(X) = c p(X)$ and $g(X) = c q(X)$. Observe that $c > 0$ because $f$ and $g$ are nonzero. Finally,
\[
	\rho(\nn_0[\alpha]) = \max \left\{ \frac{f(1)}{g(1)}, \frac{g(1)}{f(1)} \right\} = \frac{c q(1)}{c p(1)} = \frac{q(1)}{p(1)}.
\]
\end{proof}

\medskip
\subsection{The Set of Elasticities}

The set $R(M) = \{ \rho(x) \mid x \in M^\bullet \}$ is called the \emph{set of elasticities} of $M$. It turns out that if $\alpha > 1$ is an algebraic number and is not an algebraic integer, then $R(\nn_0[\alpha])$ is dense in $\rr_{\ge 1}$.
 
 \begin{prop} \label{prop:density of the set of elasticities}
 	Let $\alpha$ be an algebraic number such that $\alpha > 1$. If $\alpha$ is not an algebraic integer, then $\nn_0[\alpha]$ is densely elastic.
 \end{prop}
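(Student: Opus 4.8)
The plan is to show that the set of elasticities $R(\nn_0[\alpha])$ is dense in $\rr_{\ge 1}$ by exhibiting, for each target ratio, an element whose elasticity approximates it. Since $\alpha > 1$ is an algebraic number that is not an algebraic integer, the denominator $\mathsf{d}$ of the leading coefficient issue forces $\nn_0[\alpha]$ to be non-finitely-generated (indeed, if it were finitely generated, the atoms would be $\{\alpha^n : n \in \nn_0\}$ and this set would have to be bounded, contradicting $\alpha > 1$; alternatively one invokes \cite[Theorem~4.1]{CG20} to get $\mathcal{A}(\nn_0[\alpha]) = \{\alpha^n : n \in \nn_0\}$ outright). So by Proposition~\ref{prop:elasticity} we already know $\rho(\nn_0[\alpha]) = \infty$; the content of the present statement is the \emph{density} of the achieved elasticities below that supremum.

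First I would fix the minimal pair $(p(X), q(X))$ of $\alpha$, so that $p(\alpha) = q(\alpha)$ with $p, q \in \nn_0[X]$ sharing no monomials, $\operatorname{ord} q(X) = 0 < \operatorname{ord} p(X)$, and — crucially — $p(1) \neq q(1)$ since $1$ is not a root of the minimal polynomial. This gives one "seed" relation with unequal factorization lengths. The key idea is then to combine powers $p(\alpha)^a q(\alpha)^b$ (which all represent the same number $\beta_1 := p(\alpha) = q(\alpha)$ raised to the power $a+b$) together with a large "ballast" term: for a suitable atom $\alpha^N$ and large multiplicity $m$, the element $x := m\alpha^N + (\text{scaled power of }\beta_1)$ admits factorizations whose lengths are controlled on the short side by using $p$ throughout and on the long side by using $q$ throughout, while the ballast $m \alpha^N$ contributes the same fixed count $m$ to every factorization. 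Concretely, if a power of the seed relation contributes roughly $s$ atoms in its shortest factorization and roughly $t = s \cdot (\max\{p(1),q(1)\}/\min\{p(1),q(1)\})^j$ in its longest, then $x$ has elasticity close to $(m + t)/(m + s)$, and by choosing $m$, then $j$, appropriately one can tune this ratio to land within any prescribed $\varepsilon$ of any target value in $[1, \infty)$. Making the ballast genuinely "inert" — i.e. ensuring no factorization of $x$ can trade ballast atoms $\alpha^N$ against atoms appearing in the seed part — is where one uses $\alpha < \alpha^{\,-k}$-type size estimates exactly as in the proof of Theorem~\ref{thm:omega primality}: by taking $N$ much larger than the degrees appearing in the seed polynomials, the atoms $\alpha^N$ are too small to reassemble into the larger atoms, so the factorizations of $x$ decompose as (factorization of the seed part) $\sqcup$ (factorization of $m\alpha^N$), and the latter has a unique-length contribution.

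The steps, in order: (1) invoke \cite[Theorem~4.1]{CG20} to pin down the atom set and note $\alpha \notin \nn$; (2) set up the minimal pair and record $p(1) \neq q(1)$, say $p(1) < q(1)$, writing $\lambda := q(1)/p(1) > 1$; (3) fix a target $r \in (1, \infty)$ and $\varepsilon > 0$, choose $j$ with $\lambda^j$ large, and then choose the ballast multiplicity $m$ so that $(m + q(1)^j)/(m + p(1)^j)$ is within $\varepsilon$ of $r$ — this is an elementary interpolation since as $m$ ranges over $\nn$ the quantity sweeps densely through $\big(1, \lambda^j\big)$ and $\lambda^j \to \infty$; (4) choose $N$ large enough (depending on $j$ and the seed degrees) that $\alpha^N$ is smaller than every partial sub-sum of the seed factorizations, forcing the "inertness" of the ballast; (5) verify that $\min \mathsf{L}(x) = m + p(1)^j$ and $\sup \mathsf{L}(x) = m + q(1)^j$ for $x := m\alpha^N \alpha^{(\deg p)j} + (\text{appropriate scaling of }q(\alpha)^j)$ — here one must be slightly careful to homogenize degrees so that both the $p$-version and the $q$-version of the seed power, multiplied by a common monomial, are genuinely realizable in $\nn_0[\alpha]$ and land above the ballast range. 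The main obstacle I expect is exactly step (4)–(5): rigorously proving that \emph{every} factorization of $x$ splits as ballast-part plus seed-part, with no cross-cancellation, requires a careful size/valuation argument (the small atoms $\alpha^N$ cannot combine to form anything as large as an atom $\alpha^i$ with $i < N$, and conversely the seed part cannot "leak" small atoms), and one must confirm the extremal lengths are attained by the pure-$p$ and pure-$q$ factorizations rather than by some mixed factorization of the seed part — this last point uses that for the seed element $\beta_1^j$ in a UFM-like subconfiguration, or by a direct induction on $j$, the length set is exactly $\{\,$products of $p(1)$'s and $q(1)$'s$\,\}$ whose extremes are $p(1)^j$ and $q(1)^j$.
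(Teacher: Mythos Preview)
Your overall strategy---add ``ballast'' atoms to an element whose length set is already spread out, so as to tune the ratio $\frac{\max+c}{\min+c}$---is exactly the shape of the paper's argument. The gap is in your inertness step~(4)--(5), and it is a real one.

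First, the size heuristic points the wrong way: since $\alpha>1$, the atom $\alpha^{N}$ is \emph{large} for large $N$, not small, so the estimates from Theorem~\ref{thm:omega primality} (which treats $0<\alpha<1$) do not transfer. A size argument in the correct direction does buy you something---choosing $n$ with $\alpha^{n+1}>\alpha^{n}+\beta$ forces every factorization of $\beta+\alpha^{n}$ to use only atoms $\alpha^{i}$ with $i\le n$---but it cannot rule out factorizations that avoid $\alpha^{n}$ entirely and use only $\alpha^{i}$ with $i<n$. This is precisely where the hypothesis that $\alpha$ is not an algebraic integer must enter, and your outline never invokes it. The paper's mechanism is: add \emph{one} copy of $\alpha^{n}$ with $n>\deg f$ and $\alpha^{n+1}>\alpha^{n}+\beta$; if some factorization of $\beta'=\beta+\alpha^{n}$ omitted $\alpha^{n}$, one would obtain $g(X)\in\nn_0[X]$ of degree $<n$ with $g(\alpha)=\alpha^{n}+f(\alpha)$, and then $X^{n}+f(X)-g(X)\in\zz[X]$ would be a \emph{monic} integer polynomial with $\alpha$ as a root---contradicting the hypothesis. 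This yields $\mathsf{Z}(\beta')=\alpha^{n}+\mathsf{Z}(\beta)$, hence $\mathsf{L}(\beta')=1+\mathsf{L}(\beta)$, and iterating gives $\frac{L+c}{\ell+c}\in R(\nn_0[\alpha])$ for all $c\in\nn$. Note that adding $m>1$ copies at once, as you propose, would produce a polynomial with leading coefficient $m$, and the monic-contradiction breaks; the paper sidesteps this by inducting one atom at a time.

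Second, your step~(5) asks for $\min\mathsf{L}(\beta_1^{j})=p(1)^{j}$ and $\max\mathsf{L}(\beta_1^{j})=q(1)^{j}$ exactly. There is no reason this should hold (other relations among the $\alpha^{i}$ may produce shorter or longer factorizations), and the paper does not attempt anything like it. It is also unnecessary: once the shift $\mathsf{L}(\beta+\alpha^{n})=1+\mathsf{L}(\beta)$ is available for \emph{arbitrary} $\beta$, one simply takes any sequence $(\beta_m)$ with $\rho(\beta_m)\to\infty$ (such a sequence exists by Proposition~\ref{prop:elasticity}) and appeals to \cite[Lemma~5.6]{GO20} to conclude that $\bigl\{\tfrac{L_m+c}{\ell_m+c}:m,c\in\nn\bigr\}$ is dense in $\rr_{\ge 1}$.
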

 
 \begin{proof}
 	Since $\alpha > 1$, we see that $0$ is not a limit point of $\nn_0[\alpha]$, and so it follows from \cite[Proposition~4.5]{fG19} that $\nn_0[\alpha]$ is a BFM. In addition, it follows from \cite[Theorem~4.1]{CG20} that $\mathcal{A}(\nn_0[\alpha]) = \{\alpha^n \mid n \in \nn_0\}$. Take $\beta \in \nn_0[\alpha]$ to be a nonzero element, and set
 	\[
 		\ell := \min \mathsf{L(\beta)} \quad \text{ and } \quad L := \max \mathsf{L(\beta)}.
 	\]
 	Let $f(X)$ be a polynomial in $\nn_0[X]$ such that $f(\alpha) = \beta$, and take $n \in \nn$ such that $n > \deg f(X)$ and $\alpha^{n+1} > \alpha^n + \beta$. Now set $\beta' := \beta + \alpha^n \in \nn_0[\alpha]$. 
 	
 	We claim that the atom $\alpha^n$ appears in every factorization of $\beta'$. Suppose, by way of contradiction, that this is not the case. Then there exists $g(X) \in \nn_0[X]$ such that $n \notin \text{supp} \, g(X)$ such that $f(\alpha) + \alpha^n = g(\alpha)$. Observe that $\deg g(X) < n$ as otherwise $g(\alpha) \ge \alpha^{\deg g(X)} \ge \alpha^{n+1} > f(\alpha) + \alpha^n$. Since $n > \max \{\deg f(X), \deg g(X)\}$, the polynomial $X^n + f(X) - g(X) \in \zz[X]$ is a monic polynomial of degree $n$ having~$\alpha$ as a root. However, this contradicts that $\alpha$ is not an algebraic integer.
 	
 	Since $\alpha^n$ appears in every factorization of $\beta'$, it follows that $\mathsf{Z}(\beta') = \alpha^n + \mathsf{Z}(\beta)$ and, consequently,
 	\[
 		\rho(\beta') = \frac{\max \mathsf{L}(\beta')}{\min \mathsf{L}(\beta')} = \frac{1 + \max \mathsf{L}(\beta)}{1 + \min \mathsf{L}(\beta)} = \frac{L+1}{\ell+1}. 
 	\]
 	By induction, we obtain that $\frac{M+n}{m+n} \in R(\nn_0[\alpha])$ for every $n \in \nn$. Finally, suppose that $(\beta_m)_{m \in \nn}$ is a sequence in $\nn_0[\alpha]$ such that $\sup \{\rho(\beta_m) \mid m \in \nn \} = \rho(\nn_0[\alpha])$, and set $\ell_m := \min \mathsf{L}(\beta_m)$ and $L_m := \max \mathsf{L}(\beta_m)$. By virtue of \cite[Lemma~5.6]{GO20},
 	\[
 		S := \bigg\{ \frac{L_m + n}{\ell_m + n} \ \bigg{|} \ m,n \in \nn \bigg\}
 	\]
 	is dense in $[1, \rho(\nn_0[\alpha])]$ (resp., in $\rr_{\ge 1}$) if $\rho(\nn_0[\alpha]) < \infty$ (resp., $\rho(\nn_0[\alpha]) = \infty$). Since $S$ is a subset of $R(\nn_0[\alpha])$, the set $R(\nn_0[\alpha])$ is dense in $\rr_{\ge 1}$ and, therefore, $\nn_0[\alpha]$ is densely elastic.
 \end{proof}
 
 Based on Proposition \ref{prop:density of the set of elasticities}, we wonder whether the set of elasticities of $\nn_0[\alpha]$ is dense in $\rr_{\ge 1}$ provided that $\nn_0[\alpha]$ has infinite elasticity.
 

\begin{question}
	Is $R(\nn_0[\alpha])$ dense in $\rr_{\ge 1}$ provided that $\rho(\nn_0[\alpha]) = \infty$?
\end{question}

\bigskip
\section{Rational Cyclic Semirings}
\label{sec:rational cyclic semirings}

The monoid~$M$ is called \emph{fully elastic} if $R(M)$ is an interval in the poset $\qq \cup \{\infty \}$. We proceed to prove that if  $q \in \qq_{> 1}$, then the monoid $\nn_0[q]$ is fully elastic. This answers a conjecture posed by Chapman et al. \cite[Conjecture~4.8]{CGG20} (see \cite[Conjecture~4.11]{fG22} for another recent conjecture related to the atomicity of $\nn_0[q]$). 

\begin{theorem}
	For any $q \in \qq_{> 1}$, the monoid $\nn_0[q]$ is fully elastic.
\end{theorem}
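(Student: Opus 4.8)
The plan is to reduce to realizing every rational $>1$ as the elasticity of a single element of $\nn_0[q]$, and then to build such elements from the basic relations $a\,q^n=b\,q^{n+1}$ together with the ``padding'' construction used in the proof of Proposition~\ref{prop:density of the set of elasticities}. If $q\in\N$ then $\nn_0[q]=\nn_0$ is free on the atom $1$, hence a UFM, and $R(\nn_0[q])=\{1\}$; so assume $q=a/b$ with $\gcd(a,b)=1$ and $a>b\ge 2$. By \cite[Theorem~4.1]{CG20}, $\mca(\nn_0[q])=\{q^n\mid n\in\nn_0\}$, so $\nn_0[q]$ is not finitely generated and $\rho(\nn_0[q])=\infty$ by Proposition~\ref{prop:elasticity}; and since $q>1$, $\nn_0[q]$ is a BFM, so every $\mathsf L(x)$ is a finite set of positive integers and $R(\nn_0[q])\subseteq\qq_{\ge 1}$. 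As $1=\rho(q^0)\in R(\nn_0[q])$, it suffices to show that $s/t\in R(\nn_0[q])$ for every coprime pair $s>t\ge 1$.

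The engine is the following consequence of the proof of Proposition~\ref{prop:density of the set of elasticities} (which applies because $q$ is not an algebraic integer): if $\beta\in\nn_0[q]^{\bullet}$ has $\min\mathsf L(\beta)=\ell$ and $\max\mathsf L(\beta)=L$, then for each $n\in\nn_0$ there is an element of $\nn_0[q]$ whose factorization set is $\mathsf Z(\beta)$ together with $n$ ``isolated'' high-degree atoms, hence of elasticity exactly $(L+n)/(\ell+n)$. Since $(L+n)/(\ell+n)=s/t$ has a solution $n\in\nn_0$ precisely when $L/\ell\ge s/t$ and $(s-t)\mid(L-\ell)$ --- in which case $n=(tL-s\ell)/(s-t)$ works --- the problem reduces to producing, for each target $s/t$, a ``realizable pair'' $(\ell,L)=(\min\mathsf L(\beta),\max\mathsf L(\beta))$ with $L/\ell\ge s/t$ and $(s-t)\mid(L-\ell)$.

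To build such pairs, analyze factorizations through the single relation $a\,q^n=b\,q^{n+1}$ (equivalently, through divisibility mod $b$, using $\gcd(a,b)=1$). Every elementary move between factorizations of a fixed element swaps $a$ copies of some $q^n$ for $b$ copies of $q^{n+1}$ or conversely, changing the length by $\pm(a-b)$; since the factorization graph is connected, $\mathsf L(x)$ lies in a single residue class mod $a-b$. For $y=a\,q^M$, the fact that $y$ has reduced denominator $b^M$ forces every factorization to have degree in $\{M,M+1\}$; hence $\min\mathsf L(a\,q^M)=b$, realized by the factorization $b\cdot q^{M+1}$ (any strictly shorter factorization would use only atoms $q^i$ with $i\le M$ and therefore sum to less than $a\,q^M$), while $g(M):=\max\mathsf L(a\,q^M)$ is finite, with $g(0)=\max\mathsf L(a)=a$ and $g(M)\ge(a-b)M+a\to\infty$ (repeatedly trade the current lowest atom downward). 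Consequently $k_M:=(g(M)-b)/(a-b)$ is a positive integer with $k_0=1$ and $k_M\to\infty$. Moreover, if $M_1<M_2<\cdots<M_r$ are spaced far enough apart, the degree windows $\{M_i,M_i+1\}$ are pairwise disjoint, a factorization of $\beta:=\sum_{i=1}^{r}a\,q^{M_i}$ decouples into factorizations of the individual $a\,q^{M_i}$, and thus $\beta$ realizes the pair $(\ell,L)=\bigl(rb,\ \sum_{i}g(M_i)\bigr)$ with $L-\ell=(a-b)\sum_i k_{M_i}$. Now put $t':=(s-t)/\gcd(s-t,a-b)$; a pigeonhole argument on the partial sums of $k_{M_0},k_{M_0+2},k_{M_0+4},\dots$ for a large $M_0$ produces a far-spaced block $M_1<\cdots<M_r$ with $r\le t'$, all $M_i$ as large as desired, and $\sum_i k_{M_i}\equiv 0\pmod{t'}$. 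Then $(s-t)\mid(a-b)t'\mid L-\ell$, and since $r$ is bounded while $\sum_i g(M_i)\to\infty$ we may choose the $M_i$ large enough that $L/\ell\ge s/t$. Feeding this pair into the engine yields an element of elasticity $s/t$; as $s/t$ was arbitrary, $R(\nn_0[q])=\qq_{\ge 1}$ is an interval, i.e.\ $\nn_0[q]$ is fully elastic.

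The step I expect to be the main obstacle is the lower bound $\min\mathsf L(a\,q^M)=b$ --- more precisely, the claim that no factorization of $a\,q^M$ uses any power $q^{M+2},q^{M+3},\dots$. ``Unexpectedly short'' factorizations are a real phenomenon (e.g.\ $9=q^2+2q^3$ in $\nn_0[3/2]$ has length $3<4=b^2$), and it is exactly the $b$-adic denominator bookkeeping that forecloses them for the particular elements $a\,q^M$. A second, more routine point is making ``spaced far enough apart'' precise in the decoupling of $\mathsf Z\bigl(\sum_i a\,q^{M_i}\bigr)$, which requires a little care only when $q$ is close to $1$.
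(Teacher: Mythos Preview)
Your ``engine'' (the padding construction and the reduction to finding a realizable pair $(\ell,L)$ with $(s-t)\mid L-\ell$ and $L/\ell\ge s/t$) is correct and is exactly what the paper uses. The gap is in your building blocks $a\,q^{M}$ and the decoupling step.

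The claim that every factorization of $a\,q^{M}$ is supported in $\{M,M+1\}$ is false. Your denominator bookkeeping does bound the \emph{top} degree by $M+1$ (that part is fine, and your worry about it is unfounded), but nothing forces the bottom degree to be $\ge M$. For $q=3/2$ one has $3q^{2}=3\cdot 1 + q + q^{2}$, a factorization using degree~$0$. More to the point, the max-length factorization of $a\,q^{M}$ is always supported on $\{0,1,\dots,M\}$ (it is obtained by repeatedly trading down), so these supports always overlap at degree~$0$, no matter how far apart the $M_i$ are. Consequently your equality $\max\mathsf L\bigl(\sum_i a\,q^{M_i}\bigr)=\sum_i g(M_i)$ fails: for $q=3/2$ and $(M_1,M_2)=(1,3)$ one computes $g(1)+g(3)=4+6=10$, but $3q+3q^{3}=9\cdot 1 + q^{2}+q^{3}$ has length $11$; the same overshoot occurs for $(1,5)$, $(1,7)$, etc. Since you cannot control $L=\max\mathsf L(\beta)$ via the $g(M_i)$, your pigeonhole on the $k_{M_i}$ collapses. (Your formula $\min\mathsf L(\beta)=rb$ \emph{is} correct when the $M_i+1$ are distinct, via the canonical representation with coefficients in $\ldb 0,a-1\rdb$; it is only the max-length side that breaks.)

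The paper sidesteps exactly this issue by taking the building blocks to be the \emph{integers} $a^{k}$ rather than $a\,q^{M}$. For these, $\max\mathsf L(a^{k})=a^{k}$ is realized by the all-degree-$0$ factorization, so $\max\mathsf L\bigl(\sum_j a^{k_j}\bigr)=\sum_j a^{k_j}$ is trivially additive. Meanwhile the \emph{minimum}-length factorization of $a^{k}$ lives in degrees $\ge k$ with coefficients in $\ldb 0,a-1\rdb$ (an argument via $a$-divisibility and \cite[Lemma~3.2]{CGG20}), and the paper spaces the $k_j$ so that these high-degree canonical forms are disjoint; their concatenation is then again canonical, hence gives $\min\mathsf L$ of the sum. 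The pigeonhole is then run on the residues of $tL_{k}-s\ell_{k}$ modulo $s-t$, choosing $s-t$ indices in the same residue class. To repair your approach you would need either a closed form for $\max\mathsf L\bigl(\sum_i a\,q^{M_i}\bigr)$ (which amounts to a carry computation in the ``base-$q$ from the top'' representation and does not factor through the $g(M_i)$), or to switch to integer building blocks as the paper does.
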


\begin{proof}
	Fix $q \in \qq_{> 1}$. If $q \in \nn$, then $\nn_0[q] = \nn_0$ is a UFM, and so it is trivially fully elastic. Then we assume that $q \notin \nn$. Write $q = a/b$ for some $a,b \in \nn$ with $\gcd(a,b) = 1$. If $a = b+1$, then $\nn_0[q]$ is fully elastic by \cite[Proposition~4.4]{CGG20}. Thus, we assume that $a-b > 1$.
	
	To argue that $\nn_0[q]$ is fully elastic, fix a rational number $s/t$, where $s,t \in \nn$ such that $s > t \ge 1$ and $\gcd(s,t) = 1$.
	\bigskip
	
	\noindent \textit{Claim:} If there exists $\beta \in \nn_0[q]$ such that $\ell := \min \mathsf{L}(\beta)$ and $L := \max \mathsf{L}(\beta)$ satisfy that $s-t \mid_\nn tL - s\ell$, then there exists $\beta' \in \nn_0[q]$ such that $\rho(\beta') = s/t$.
	\smallskip
	
	\noindent \textit{Proof of Claim:} Fix $\beta \in \nn_0[q]$, and set $\ell := \min \mathsf{L}(\beta)$ and $L := \max \mathsf{L}(\beta)$. Now suppose that $s-t \mid_\nn tL - s\ell$. Since $b \ge 2$, the rational $q$ is not an algebraic integer, and so we can proceed as we did in the proof of Proposition~\ref{prop:density of the set of elasticities} to verify that
	\begin{equation} \label{eq:set of elasticities}
		S := \bigg\{ \frac{L + c}{\ell + c} \ \bigg{|} \ c \in \nn \bigg\} \subseteq R(\nn_0[q]).
	\end{equation}
	Since $s-t \mid_\nn tL - s\ell$, we can take $c \in \nn$ such that $tL - s\ell = c(s-t)$ and, from this equality, we obtain that
	\[
		\frac{s}{t} = \frac{L + c}{ \ell + c} \in S.
	\]
	Therefore, by virtue of~\eqref{eq:set of elasticities}, there exists $\beta' \in \nn_0[q]$ such that $\rho(\beta') = s/t$, from which the claim follows.
	\smallskip
	
	Now for each $k \in \nn$, set $\ell_k := \min \mathsf{L}(a^k)$ and $L_k = \max \mathsf{L}(a^k)$. Since $q^k \in \mathcal{A}(\nn_0[q])$, we see that $\ell_k = \min \mathsf{L}(b^k q^k) \le b^k$. Similarly, $1 \in \mathcal{A}(\nn_0[q])$ ensures that $L_k \ge a^k$. As a result, if we take $N \in \nn$ such that $q^N > s/t$, then we obtain that $t L_k - s \ell_k \ge ta^k - sb^k \ge 1$ for every $k \ge N$. Now let $r_k$ denote the integer in $\ldb 0, s-t-1 \rdb$ such that
	\[
		r_k \, \equiv \, t L_k - s \ell_k \! \pmod{s-t}.
	\]

	Now fix $k \in \nn$. Since $1 = \min \mathcal{A}(\nn_0[q])$, we can easily see that $a^k \cdot 1$ is the only maximum-length factorization of $a^k$ in $\nn_0[q]$. Suppose now that
	\[
		z_k := \sum_{i=j}^{m_k} n_i q^i \in \mathsf{Z}(a^k)
	\]
	is a minimum-length factorization of $a^k$, where $j \in \nn_0$ and $n_j \neq 0$. Observe that $j \ge k$ as, otherwise, the equality
	\[
		a^{k-j} b^{m_k} = n_j b^{m_k - j} + \sum_{i = j+1}^{m_k} n_i a^{i-j} b^{m_k - i}
	\]
	would imply that $a \mid_\nn n_j$ and, after replacing $a q^j$ by $b q^{j+1}$, we would obtain another factorization $z'$ in $\mathsf{Z}(a^k)$ with $|z'| < |z|$. This observation, along with \cite[Lemma~3.2]{CGG20}, allows us to write $z = \sum_{i=k}^{m_k} n_i q^i$ for some $n_k, \dots, n_{m_k} \in \ldb 0, a-1 \rdb$. Now take $r \in \ldb 0, s-t-1 \rdb$ such that $r_k = r$ for infinitely many $k \in \nn$. Then pick the sub-indices $k_1, \dots, k_{s-t}$ as follows. First, take $k_1 \ge N$ with $r_{k_1} = r$. Once $k_j$ has been chosen, take $k_{j+1} > m_{k_j}$ with $r_{k_{j+1}} = r$.
	
	Now consider the element $x = a^{k_1} + \dots + a^{k_{s-t}}$, and set $\ell_x := \min \mathsf{L}(x)$ and $L_x := \max \mathsf{L}(x)$. It is clear that $L_x = L_{k_1} + \dots + L_{k_{s-t}}$. 
	We proceed to show that the identity $\ell_x = \ell_{k_1} + \cdots + \ell_{k_{s - t}}$ also holds. For $j\in \ldb 1, s - t \rdb$, let $\sum_{i = k_j}^{m_{k_j}} n_i^{(j)} q^i$ be the minimum-length factorization of $a^{k_j}$, where $n_i^{(j)}\in \ldb 0, a - 1 \rdb$ for every $i \in \ldb k_j, m_{k_j} \rdb$. Since $x = a^{k_1} + \dots + a^{k_{s-t}}$,
	\begin{equation} \label{eq:factorization of x}
		\sum_{j = 1}^{s - t}\sum_{i = k_j}^{m_{k_j}} n_i^{(j)} q^i \in \mathsf{Z}(x).
	\end{equation}
	Notice that from our construction we have
	\[
		\ldb k_1, m_{k_1} \rdb \prec \ldb k_2, m_{k_2} \rdb \prec \dots \prec \ldb k_{s-t}, m_{k_{s-t}} \rdb,
	\]
	where for any two subsets $S$ and $T$ of $\rr$, we write $S \prec T$ when $s < t$ for all $s \in S$ and $t \in T$. 
	Thus, in the factorization of $x$ show in~\eqref{eq:factorization of x}, for different ordered pairs $(i, j)$ and $(i', j')$, the coefficients $n_i^{(j)}$ and $n_{i'}^{(j')}$ correspond to different atoms, and all of the coefficients $n_i^{(j)}$ lay in the interval $\ldb 0, a - 1 \rdb$. It follows now from \cite[Lemma~3.2]{CGG20} that $x = \sum_{j = 1}^{s - t}\sum_{i = k_j}^{m_{k_j}} n_i^{(j)} q^i$ is the minimal factorization of $x$, hence,
	\[
		\ell_x = \sum_{j = 1}^{s - t}\sum_{i = k_j}^{m_{k_j}} n_i^{(j)} = \ell_{k_1} + \dots + \ell_{k_{s-t}},
	\]
	as desired. Therefore
	\begin{align*}
		tL_x - s \ell_x &= t \sum_{j=1}^{s-t} L_{k_j} - s \sum_{j=1}^{s-t} \ell_{k_j} = \sum_{j=1}^{s-t} (t L_{k_j} - s \ell_{k_j})\\
					   &\equiv \sum_{j=1}^{s-t} r_j \equiv (s-t)r \equiv 0 \! \! \! \pmod{s-t}.
	\end{align*}
	Since $\ell_x := \min \mathsf{L}(x)$ and $L_x := \max \mathsf{L}(x)$, the fact that $s-t \mid_\nn tL_x - s \ell_x$, along with the previous claim, guarantees the existence of $x' \in \nn_0[q]$ such that $\rho(x') = s/t$. Hence $\nn_0[q]$ is fully elastic.
\end{proof}

\bigskip
\section*{Acknowledgments}

We would like to thank our mentor, Felix Gotti, for suggesting this project and for his guidance all the way through. During the preparation of this paper, we were part of PRIMES-USA at MIT, and we are grateful to the organizers and directors of the program. Also, we would like to thank Kent Vashaw for reading our paper and providing helpful comments.

\bigskip

\end{document}